\@date \else {\vskip3ex \centering\footnotesize\@date\par\vskip1ex}\fi
\else \@footnotetext{\@setdate}\fi}
\theoremstyle{plain}
\newtheorem{theorem}[subsection]{Theorem}
\newtheorem*{theorem*}{Theorem}
\newtheorem{proposition}[subsection]{Proposition}
\newtheorem*{proposition*}{Proposition}
\newtheorem{lemma}[subsection]{Lemma}
\newtheorem*{lemma*}{Lemma}
\newtheorem*{fact*}{Fact}
\newtheorem{corollary}[subsection]{Corollary}
\newtheorem*{corollary*}{Corollary}
\theoremstyle{definition}
\newtheorem{definition}[subsection]{Definition}
\theoremstyle{remark}
\newtheorem{remark}[subsection]{Remark}
\newtheorem{example}[subsection]{Example}
\renewcommand{\comment}[1] {  }
\DeclareFontFamily{OT1}{rsfs}{}
\DeclareFontShape{OT1}{rsfs}{n}{it}{<-> rsfs10}{}
\DeclareMathAlphabet{\mathscr}{OT1}{rsfs}{n}{it}
\newcommand{\CC}{\mathbb{C}}
\newcommand{\QQ}{\mathbb{Q}}
\newcommand{\Hom}{\operatorname{Hom}}
\newcommand{\Gm}{\mathbb{G}_m}
\newcommand{\GL}{\operatorname{GL}}
\newcommand{\Mat}{\operatorname{Mat}}
\newcommand{\PGL}{\operatorname{PGL}}
\newcommand{\SL}{\operatorname{SL}}
\newcommand{\Sp}{\operatorname{Sp}}
\newcommand{\diag}{{\operatorname{diag}}}
\renewcommand{\span}{\operatorname{span}}
\newcommand{\std}{{\operatorname{std}}}
\newcommand{\Asymp}{\operatorname{Asymp}}
\newcommand{\Sat}{\operatorname{Sat}}
\begin{document}
\numberwithin{equation}{section}
\setcounter{tocdepth}{1}
\title{Inverse Satake transforms}
\author{Yiannis Sakellaridis}
\address{Rutgers University - Newark \\
101 Warren Street \\
Smith Hall 216 \\
Newark, NJ 07102\\
USA.}
\email{sakellar@rutgers.edu}

\begin{abstract}
Let $H$ be a split reductive group over a local non-archimedean field, and let $\hat H$ denote its Langlands dual group. 
We present an explicit formula for the generating function of an unramified $L$-function associated to a highest weight representation of the dual group, considered as a series of elements in the Hecke algebra of $H$. This offers an alternative approach to a solution of the same problem by Wen-Wei Li. Moreover, we generalize the notion of ``Satake transform'' and perform the analogous calculation for a large class of spherical varieties.
\end{abstract}

\maketitle

\tableofcontents

\section{Introduction}

The purpose of this paper is to discuss the notion of ``Satake transform'' for a spherical variety $X$ under a reductive group $G$ over a $p$-adic field $F$, generalizing the corresponding notion for the case $X=$ a reductive group, and to present an explicit formula for its inversion, based on the theory of asymptotics of smooth $G$-representations on $X$. In particular, in the group case this gives an alternative approach to the one discovered by Wen-Wei Li in \cite{WWL}.

Let $H$ be a reductive group over a $p$-adic field $F$, and let us assume for simplicity that $H$ is split. The Satake transform establishes an isomorphism between the unramified Hecke algebra of $H$ (with respect to some hyperspecial open compact subgroup) and the algebra of invariant polynomials on the complex dual group $\check H$. 

While the inversion of this transform is known to be given by Kazhdan-Lusztig polynomials, one would like an ``efficient'' method of inversion, especially when invariant polynomials on $\check H$ are replaced by certain rational functions on $\check H$ (which, on the other side of the Satake isomorphism, should correspond to series of elements in the Hecke algebra). In particular, one is motivated by the paper of B.C.\ Ng\^o \cite{Ngo-monoids}, which suggests a relationship between ``basic functions'' on reductive monoids and local unramified automorphic $L$-functions. The prototype of this is the characteristic function of $n\times n$ integers, which was famously used by Godement and Jacquet \cite{GJ} to study the standard $L$-function for $\GL_n$. Ng\^o's discovery shows that, at least in the local, unramified setting, the Godement-Jacquet approach is part of a more general story, where \emph{all} automorphic $L$-functions appear. In order, however, to study global problems, one would need to generalize Fourier transforms and the Poisson summation formula. Part of the motivation of studying the inverse Satake transform has to do with the hope of finding an explicit (non-spectral) description of a Fourier transform in this setting. 

In the paper \cite{WWL}, Wen-Wei Li showed that the inversion problem for elements of the Hecke algebra corresponding to $L$-functions can be efficiently solved, circumventing the tedious, algorithmic process of decomposing symmetric powers of a representation of $\check H$ and then using an infinite number of Kazhdan-Lusztig polynomials. His Theorem 5.3.1 offers a complete resolution to this problem. The goal of the present paper is to offer an alternative approach, which automatically applies to spherical varieties as well.

Indeed, as was shown in \cite{SaRS}, the Godement-Jacquet method and its generalizations proposed by Ng\^o are themselves part of a broader framework which includes the Rankin-Selberg method. The basic object for this generalization is that of an \emph{affine spherical variety}. While we do not yet know the precise relationship with $L$-functions in the most general case, it seems to be confirmed by all known examples \cite{BFGM, BNS}, and it may not be long before it is established. Thus, for the same reasons as above it will be interesting to have some understanding of inverse spectral transforms in this generality and, in any case, as we will see, the theory of asymptotics on spherical varieties provides a very straightforward approach to the problem of inversion, including in the group case.

\subsection{Acknowledgments} I am very grateful to Wen-Wei Li for fruitful conversations on this topic, corrections on previous drafts, for explaining to me the relation between my formula and his, and for allowing me to reproduce this explanation here (\S \ref{sec:groupcase}). I would also like ot thank Bill Casselman who has enthusiastically encouraged us to think about this problem. This work was supported by by NSF grant DMS-1101471. The paper was completed during my stay at MSRI for the program on ``New Geometric Methods in Number Theory and Automorphic Forms'', and it is my pleasure to thank MSRI for its hospitality and support.

\section{Assumptions and Cartan decomposition} \label{sec:assumptions}

We will assume throughout that $G$ is split over $F$, in order to use results that are not yet available in the general case of unramified groups. In particular, we may fix the Chevalley model for $G$ over the ring of integers $\mathfrak o$ of $F$, and denote by $K$ its hyperspecial maximal compact subgroup $G(\mathfrak o)$. Whenever there is no confusion, we will be denoting $G(F)$ simply by $G$, $X(F)$ by $X$ etc.

We let $X$ be a homogeneous spherical variety which satisfies the following conditions:

\begin{itemize}
\item It is quasi-affine, admits a $G$-eigenmeasure, and its open Borel orbit admits a $B$-invariant measure. These assumptions cause no harm to generality, since every homogeneous variety admits a torus bundle whose total space satisfies them \cite[\S 3.8]{SaSpc}.
\item It is \emph{wavefront} \cite[\S 2.1]{SV}. This property, whose definition will be recalled below, applies to almost all spherical varieties, and certainly all symmetric ones (such as: reductive groups themselves). The reason for imposing it is to be able to take advantage of the theory of asymptotics, which is for now missing in the non-wavefront case.
\item It satisfies the conclusions of \cite[Theorems 2.3.8, 2.3.10]{SaRS} on orbits of a hyperspecial and an Iwahori subgroup. These conditions are satisfied at almost every place when $G, X$ are defined over a global field, and will also be recalled below.
\end{itemize}

To formulate the above conditions, we introduce the notion of a \emph{Cartan subtorus} $A_X\subset X$, pointing the reader to \cite[\S 2]{SV}  for more details (where the notation $A_X$ was also used, but not the term ``Cartan subtorus''). At first reading I would recommend to the reader to skip the definitions, and just accept the existence of a ``root system'' formalism allowing for a generalization of the Cartan decomposition to an arbitrary spherical variety.

A Cartan subtorus of $X$ is a subvariety obtained by fixing a triple $(x_0,B,f)$, where $x_0\in X$, $B$ is a Borel subgroup such that $x_0$ is contained in its open orbit $X^\circ$, and $f$ is a $B$-eigenfunction
 whose set-theoretic zero locus is $X\smallsetminus X^\circ$. If we pull the function $f$ back to a function on $G$ via the action map $G\ni g\mapsto x_0g\in G$, its differential becomes an element of the coadjoint representation whose centralizer we denote by $L(X)$. Then the image of the action map:
$$ L(X)\ni \ell \mapsto x_0\cdot \ell \in X$$
will be called a Cartan subtorus $A_X$ of $X$; through the action map, it is isomorphic to a torus quotient of $L(X)$, and the resulting group structure will be considered as part of the data of a Cartan subtorus, as will the resulting identification with the \emph{universal Cartan} (which we will be denoting by the same letter) $A_X\simeq X^\circ /N$ (where $N$ is the unipotent radical of $B$).

In the group case, $X=H$ under the $G=H\times H$ action (defined throughout as a right action, i.e.\ $x\cdot (h_1,h_2) := h_1^{-1} x h_2$), the choice of a Cartan subtorus amounts to a choice of a pair of Borel subgroups $B_1, B_2$ of $H$, together with a point $x_0$ in the corresponding open Bruhat cell. The stabilizer of the point in $B_1\times B_2$ is then a torus, whose centralizer is a Cartan subgroup, whose orbit containing $x_0$ is the ``Cartan subtorus''. 

We will denote by $A$ the universal Cartan of $G$, $A=B/N$, so we have a quotient map of algebraic tori: $A\twoheadrightarrow A_X$, which may not be surjective on $F$-points. The group $L(X)$ constructed above is a Levi subgroup of the parabolic stabilizing the open Borel orbit $X^\circ$,
$$P(X):= \{ g\in G| X^\circ g = X^\circ\}\supset B.$$
This parabolic plays an important role in harmonic analysis, since it gives rise to an ``Arthur $\SL_2$''-parameter -- a deviation from temperedness (when $P(X)\ne B$).  

The vector space $\mathfrak a_X^*:=\Hom(A_X,\Gm)\otimes \QQ$ carries an almost canonical based root system. ``Almost canonical'' refers to the fact that there are different choices in the literature about the length of roots; for a specific choice, we get the root system of the dual group $\check G_X$ of $X$ defined in \cite{SV}. We will return to this root system in section \ref{sec:asymp-basic}, but until then we only need its Weyl group $W_X$ (the ``little Weyl group of $X$'') and its canonical antidominant Weyl chamber $\mathfrak a_X^+\subset\mathfrak a_X$. We have maps:
$$ \mathfrak a_X^+\subset\mathfrak a_X:= \Hom(\Gm,A_X)\otimes\QQ \twoheadleftarrow \Hom(\Gm,B) \otimes\QQ =:\mathfrak a \supset \mathfrak a^+,$$
where $\mathfrak a^+$ denotes the antidominant Weyl chamber corresponding to the universal Cartan of $G$.

The wavefront condition is the condition that the map: $\mathfrak a\to \mathfrak a_X$ sends $\mathfrak a^+$ \emph{onto} $\mathfrak a_X^+$. This technical condition is satisfied for all symmetric varieties and almost all spherical varieties; it is being imposed because this is the case in which the theory of asympotics of \cite[\S 5]{SV} has been completed. (Though, conjecturally, the wavefront condition shouldn't be necessary.)

The other two conditions state that there is a Cartan subtorus $A_X\subset X$, such that the following hold:
\begin{enumerate}
\item The natural map:
\begin{equation}\label{Cartanweak} \tilde \Lambda_X^+:= A_X(F)^+/A(\mathfrak o) \to X(F)/K
\end{equation}
is surjective. Elements of $\tilde\Lambda_X^+$ which map to distinct elements of $\Lambda_X^+ = A_X(F)^+/A_X(\mathfrak o)$ correspond to distinct $K$-orbits on $X$. We also assume that $X$ has a model over $\mathfrak o$ (compatible with that of $G$), and that $X(\mathfrak o)$ consists of the orbits which map to $0\in \Lambda_X^+$ under the map $\tilde\Lambda_X^+\to \Lambda_X^+$.

Here we denote by $A_X(F)^+=A_X^+$ the set of antidominant elements of the torus $A_X$ with respect to the above based root system; that is, the elements of $A_X(F)^+$ are those elements $a$ that satisfy: $|e^\gamma(a)|\ge 1$ for every positive root $\gamma$. (In order to use additive notation on $\mathfrak a_X^*$, we use exponentials to denote actual characters of $A_X$.)

\item There is an Iwahori subgroup $J$ relative to a Borel $B$ used to define $A_X$ such that for every $x\in A_X(F)^+$ we have:
$$ x J = x (J\cap B).$$
\end{enumerate}
\emph{From now on we fix such a Cartan subtorus $A_X$.}

This is the best decomposition that one can hope for in general, and it holds at almost all places if $G$ and $X$ are defined over a global field, as explained in \cite{SaRS}. During the first reading it is advisable to restrict to the case that $A\to A_X$ is surjective on $F$-points, where $A=B/N$. In that case we have: 
$$\tilde \Lambda_X=A_X(F)/A(\mathfrak o) = A_X(F)/A_X(\mathfrak o)=: \Lambda_X,$$ 
which can also be identified with the cocharacter group of $A_X$ via the map: 
$$ \check\lambda \mapsto \check\lambda(\varpi) A_X(\mathfrak o)$$
for any cocharacter $\check\lambda$ into $A_X$. However, such a clean description is in general impossible, as demonstrated by the following example:

\begin{example}\label{exampleSL2}
 Let $X=T\backslash \SL_2$. As a variety, it can also be identified with the quotient of $\PGL_2$ by a torus. Thus, we have a surjection: $X(F)/\SL_2(\mathfrak o)\twoheadrightarrow X(F)/\PGL_2(\mathfrak o)$. One can easily see that $A_X=A_{\PGL_2}$, hence:
 $$ X(F)/\PGL_2(\mathfrak o) \leftrightarrow A_{\PGL_2}(F)^+/A_{\PGL_2}(\mathfrak o) \leftrightarrow \mathbb N.$$
 
 Choose a good, smooth model over $\mathfrak o$ (e.g.: $T$ being the special orthogonal group of an integral, residually nondegenerate quadratic form). Then, under the above parametrization, ``zero'' corresponds to $X(\mathfrak o)$. 
 
 The fibers of the map: 
 $$ \tilde \Lambda_X= A_X(F)^+/A_{\SL_2}(\mathfrak o) \to A_{\PGL_2}(F)/A_{\PGL_2}(\mathfrak o) = \Lambda_X$$ 
 correspond to square classes in $\mathfrak o^\times$. However, it is easy to see that $X(\mathfrak o)$ forms a unique $\SL_2(\mathfrak o)$-orbit. Hence, the map \eqref{Cartanweak} is not injective in that case. 
 
 On the other hand, we claim that for sufficiently large elements of $\tilde\Lambda_X$ the map \emph{is} injective. Indeed, the theory of asymptotics that we will recall below states that on ``very antidominant'' elements of $A_X(F)$ the map \eqref{Cartanweak} has to be injective; more precisely, there is a bijection of ``very large'' elements of $X(F)/K$ and ``very large'' elements of $Y(F)/K$, where $Y$ is the boundary degeneration that we will encounter. In this case, $Y(F)= N^-\backslash\PGL_2$, where $N^-$ is unipotent, and clearly $Y(F)/\SL_2(\mathfrak o) \leftrightarrow A_X(F)/A_{\SL_2}(\mathfrak o)$.
\end{example}

\begin{example}\label{examplegroup1}
 In the group case, $X=H$, $G=H\times H$, we have $\Lambda_X= $ the quotient of $\Lambda_H\times\Lambda_H$ (where $\Lambda_H$ is the coweight lattice of the universal Cartan of $H$) by coweights of the form: $(\check\lambda,-\check\lambda^\vee)$, where for a given coweight $\check\lambda$ of $H$, $\check\lambda^\vee$ denotes the ``dual'' weight, $\check\lambda^\vee = - {^{w_0}\check\lambda}$, $w_0=$ the longest Weyl group element. Thus, $\Lambda_X$ can be identified with $\Lambda_H$, but one needs to specify whether the identification is on the ``left'' or on the ``right'' copy -- the two differ by the operation $\check\lambda\mapsto \check\lambda^\vee$. 
 In either case, the set $\Lambda_X^+$ is the set of antidominant elements of $\Lambda_H$.  We remark that the ``left'' choice gives the \emph{opposite} of the ``obvious'' Cartan decomposition for $H$, i.e.\ an antidominant cocharacter $\check\lambda$ is associated to the coset $K_H \check\lambda(\varpi)^{-1} K_H$, which is the coset of $K_H \check\lambda^\vee(\varpi) K_H$, due to the way that multiplication on the left is defined as a right action.
\end{example}

In sections \ref{sec:Satake}-\ref{sec:validity} we will present a general method for reducing the problem of inverse Satake transforms to a (much easier) problem on horospherical ``boundary degenerations'' of $X$. Then, from section \ref{sec:asymp-basic} on, we will impose additional, strong assumptions on $X$ that allow us to explicitly perform this calculation based on the results of \cite{SaSph}. These additional assumptions \emph{contain} the following:
\begin{enumerate}
\item $\tilde\Lambda_X = \Lambda_X$; in other words, the $F$-points of a Borel subgroup act with a unique open orbit on $X(F)$.
\item $X$ is affine homogeneous, or Whittaker-induced from an affine homogeneous spherical variety of a Levi subgroup in the sense of \cite[\S 2.6]{SV}.
\end{enumerate}
There are \emph{more} assumptions needed, in order to ensure the validity of a theorem of \emph{loc.cit}.\ which we recall as Theorem \ref{SaSphTheorem}; these conditions are of combinatorial nature, can be checked relatively easily in each case, and they are \emph{expected} to be no stronger than the above two; however, I do not know how to prove this. They hold in the group case, of course.

\section{Definition: Satake transform} \label{sec:Satake}

The canonical map of universal Cartans:
$$A\to A_X$$
gives rise to a map with finite kernel between their complex dual tori:
\begin{equation}\label{gp} \check A_X \to \check A.
\end{equation}
Recall that (the complex points of) $\check A_X = \Hom(\Lambda_X, \CC^\times)$ (and similarly for $\check A$); we identify its elements in the standard way with unramified characters of $A_X(F)$, via the identification $A_X(F)/A_X(\mathfrak o)\simeq \Lambda_X$ that we discussed previously.

The map \eqref{gp} is an embedding if and only if $\Lambda_X = \tilde \Lambda_X$, i.e.\ if and only if the map $A\to A_X$ is surjective on $F$-points. In any case, the image of this map will be denoted by $\check A_{X,GN}\subset\check A$; it is the Cartan of the \emph{Gaitsgory-Nadler dual group} of $X$.

The map \eqref{gp} can be used to study the unramified spectrum of $X$, but it requires a correction which takes into account the deviation from temperedness. (For example, for $X=$ a point we have $\check A_X=1$, but the trivial representation does not have trivial Langlands parameter.) For a fixed Borel $B$ we consider
$ \delta_{(X)}^\frac{1}{2} := $ the square root of the modular character (defined as the quotient of right by left Haar measure) of the group $B\cap L(X)$, considered as an unramified character of $B$ and hence as an element of $\check A$. It is stable under the action of $W_X$, and we consider the  $W_X$-equivariant morphism: 
\begin{equation} \label{shift} \check A_X \ni \tilde\chi\mapsto \chi  \delta_{(X)}^\frac{1}{2} \in \check A,
\end{equation}
where $\chi$ is the image of $\tilde \chi$ under $\check A_X\to \check A_{X,GN}$.

\begin{remark}
One can replace every occurrence of $\delta_{(X)}$ in this paper by $\delta_{(X)}^{-1}$ without introducing any errors; indeed, the two elements are conjugate under the Weyl group of $L(X)$, which acts trivially on $\check A_X$, and therefore whether one uses $\delta_{(X)}$ or its inverse plays no role in the restriction of $W$-invariant functions on $\check A$, which is the only setting where this character will appear.
\end{remark}

In order to not get confused between the maps \eqref{gp} and \eqref{shift}, we will be  writing $\delta_{(X)}^\frac{1}{2} \check A_X$ to emphasize that certain restriction maps are taken with respect to \eqref{shift}. When \eqref{gp} is injective, of course, $\delta_{(X)}^\frac{1}{2} \check A_X$ can be identified with the subvariety $\delta_{(X)}^\frac{1}{2}\check A_{X,GN}$ of $\check A$.

Now consider the unramified Hecke algebra $\mathcal H(G,K)$ of $G$, which via the Satake isomorphism is canonically isomorphic to $\CC[\check G]^{\check G} \simeq \CC[\check A]^W$. By restriction to the image of  \eqref{shift} we get a morphism of algebras: 

\begin{equation}\label{restriction} \mathcal H(G,K) \simeq \CC[\check G]^{\check G}  = \CC[\check A]^W \xrightarrow{(*)} \CC[\delta_{(X)}^\frac{1}{2}\check A_{X, GN}]^{W_X} =: \mathcal H_X.
\end{equation}

We set $\mathcal S(X)= C_c^\infty(X)$ and $\Phi^0=$ the characteristic function of $X(\mathfrak o)$ (which, recall, we have assumed to consist of the $K$-orbits that map to $0\in \Lambda_X^+$ under $\tilde\Lambda_X^+\to \Lambda_X^+$). These definitions are the ``correct'' ones only when $X$ is affine, which is the case we will eventually focus on. Then:

\begin{theorem}[{\cite[Theorem 6.2.1]{SaSpc}}]\label{factorsthrough}
 The Hecke algebra $\mathcal H(G,K)$ acts on $\mathcal S(X)^K$ via its quotient (*), and $\mathcal S(X)^K$ is torsion-free as a module for the image of $\mathcal H(G,K)$ under (*).
\end{theorem}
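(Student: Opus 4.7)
My plan is to prove both claims via a spectral analysis of $\mathcal S(X)^K$ under the convolution action of $\mathcal H(G,K)$. Both statements reduce to identifying the unramified ``$X$-distinguished'' spectrum --- the irreducible unramified representations $\pi$ with $\pi^K \neq 0$ admitting a nonzero functional invariant under a point stabilizer $H := \mathrm{Stab}_G(x_0)$ --- and showing that this spectrum is precisely the subvariety $\delta_{(X)}^{\frac{1}{2}}\check A_{X,GN}/W_X \subset \check A/W$. Once identified, both (a) the factorization through (*) and (b) the torsion-freeness follow by playing off the separation property of a meromorphic family of $H$-invariant functionals against the intertwining property of convolution.

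\textbf{Identifying the distinguished locus.} The central geometric input is a Frobenius reciprocity analysis of $H$-invariant functionals on an unramified principal series $I(\chi)$. Such functionals correspond to $H$-equivariant distributions on $G/B$ whose support is a union of $H$-orbits, i.e.\ of $B$-orbits on $X$. The open $B$-orbit $X^\circ$ contributes one line of such functionals precisely when the character $\chi \cdot \delta_B^{1/2}$ is compatible with the modular character of the stabilizer of $x_0$ in $B$. That stabilizer is a unipotent extension of a subgroup of $L(X)\cap B$, and a direct computation of its modulus produces the shift by $\delta_{(X)}^{\frac{1}{2}}$; compatibility then amounts to $\chi$ descending through $A \twoheadleftarrow A_X$, giving the condition $\chi \in \delta_{(X)}^{\frac{1}{2}}\check A_{X,GN}$. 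Non-open orbits contribute only along finitely many hyperplanes in $\chi$ and are handled by meromorphic continuation.

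\textbf{Deducing factorization and torsion-freeness.} With the distinguished locus in hand, construct a meromorphic family of $H$-invariant functionals $\ell_\chi : I(\chi) \to \CC$ parameterized by $\chi \in \delta_{(X)}^{\frac{1}{2}}\check A_{X,GN}$, and pair it with $\mathcal S(X)^K$ to produce a map $\mathcal S(X)^K \to \mathrm{Rat}\bigl(\delta_{(X)}^{\frac{1}{2}}\check A_{X,GN}/W_X\bigr)$ sending $\Phi \mapsto \ell_\chi(\Phi)$. The Casselman-Shalika/Macdonald-type theory for $X$ (expected to underlie \cite[Thm.~6.2.1]{SaSpc}) shows that this map is injective, i.e.\ $\{\ell_\chi\}$ separates $\mathcal S(X)^K$. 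The intertwining property $\ell_\chi(h \ast \Phi) = \hat h(\chi)\, \ell_\chi(\Phi)$ then yields both claims at once: if $\hat h$ vanishes identically on $\delta_{(X)}^{\frac{1}{2}}\check A_{X,GN}$ then $h \ast \Phi = 0$ for every $\Phi$, proving the factorization; if $\hat h \in \mathcal H_X$ is nonzero, its zero locus is a proper Zariski-closed subset, so $h \ast \Phi = 0$ forces $\ell_\chi(\Phi)$ to vanish on a dense open set, hence identically, hence $\Phi = 0$ by separation.

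\textbf{Main obstacle.} The principal difficulty is the construction of the separating family $\{\ell_\chi\}$ together with its meromorphic dependence on $\chi$. This requires the asymptotic theory of smooth representations on $X$, which is why the wavefront hypothesis of \S \ref{sec:assumptions} is imposed. Additionally, the discrepancy between $\tilde\Lambda_X^+$ and $\Lambda_X^+$ exhibited in Example \ref{exampleSL2} means that separation is initially achieved only at ``sufficiently antidominant'' parameters (where the Cartan decomposition is injective and $K$-orbits are asymptotically controlled by their image in $\Lambda_X^+$) and must then be propagated to all of $\mathcal S(X)^K$ by Hecke-module finite generation; this combinatorial bookkeeping, rather than any single conceptual step, is the most delicate portion of the argument.
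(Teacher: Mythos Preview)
The paper does not prove this theorem at all: it is quoted verbatim as \cite[Theorem 6.2.1]{SaSpc} and no argument is supplied. So there is no ``paper's own proof'' to compare against; the relevant question is only whether your sketch is a plausible reconstruction of the argument behind the cited result.

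On that score your outline is essentially correct and in fact matches the machinery the paper itself later invokes. The separating family of eigenfunctionals you call $\{\ell_\chi\}$ is exactly the rational family $\Omega_{\tilde\chi}$ of Theorem~\ref{generaleigenfunctions}, whose density property (item (3) there, cited from \cite[Theorem 6.1.1]{SaSpc}) is precisely the injectivity statement you need. With that in hand, your deduction of both factorization and torsion-freeness from the intertwining relation $h\cdot\Omega_{\tilde\chi} = \hat h(\chi\delta_{(X)}^{1/2})\,\Omega_{\tilde\chi}$ is standard and correct.

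One correction, however: your claim that constructing the separating family ``requires the asymptotic theory of smooth representations on $X$, which is why the wavefront hypothesis of \S\ref{sec:assumptions} is imposed'' is a misattribution. The eigenfunctions $\Omega_{\tilde\chi}$ are built by Eisenstein-integral methods (integration over the open $P(X)$-orbit and meromorphic continuation), as the proof of Proposition~\ref{equivariance} recalls; this does not use asymptotics or wavefrontness. The wavefront hypothesis is imposed in this paper for the \emph{later} use of the asymptotics map $\Asymp$ (Theorem~\ref{thmasymp} and the results of \S\ref{sec:validity}--\ref{sec:inverseSatake}), not for Theorem~\ref{factorsthrough}. Your ``main obstacle'' paragraph therefore locates the difficulty in the wrong place.
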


In many cases it is known, and in general it is expected, that the action of $\mathcal H(G,K)$ extends ``naturally'' to an action of $\mathcal H_X$ on $\mathcal S(X)^K$ (and this action is also to be denoted as a convolution: $(h,\Phi)\mapsto h\star\Phi$). When the image of (*) generates $\mathcal H_X$ rationally (i.e.\ generates its field of fractions), such an extension is necessarily unique by the above torsion-freeness statement. Since this covers most of the interesting cases, I will remind of the property characterizing this ``natural'' extension in the general case in the proof of Proposition \ref{equivariance} (see also \cite[Conjecture 6.3]{SaSpc}.

There are several related questions that one might ask in order to enrich the upcoming notion of inverse Satake transforms, for example: whether the action of $\mathcal H_X$ on $\mathcal S(X)^K$ extends further to an action of $\CC[\delta_{(X)}^\frac{1}{2}\check A_X]^{W_X}$. In some cases, the answer is no, at least not in the same ``natural'' way that we alluded to before. The cases where I know that this fails are the cases where the dual group $\check G_X$ cannot be defined (due to ``reflections of type $N$'', s.\ \cite{SV}). 

In any case, in the present paper I ignore such questions. I will restrict to the case when the extension of the action to $\mathcal H_X$ is known, and when it is not known the reader should replace $\mathcal H_X$ in everything that follows by $\mathcal H_X'=$ the image of (*). Until section \ref{sec:inverseSatake}, the exact nature of the extension to $\mathcal H_X$ does not matter for the statements.

We will generally distinguish notationally between an element $h$ of $\mathcal H_X$ considered as an operator on $\mathcal S(X)^K$ (or on $C^\infty(X)^K$), and its ``Satake transform'' $\hat h\in \CC[\delta_{(X)}^\frac{1}{2}\check A_{X,GN}]^{W_X}$. 
For an element $h\in \mathcal H_X$, we will denote by $h^\vee$ the ``dual'' element, characterized by:
$$\widehat{h^\vee}(\delta_{(X)}^\frac{1}{2}\chi):= \hat h(\delta_{(X)}^\frac{1}{2}\chi^{-1})$$
(as polynomials on $\delta_{(X)}^\frac{1}{2}\check A_{X, GN}$).
This is compatible under the above map with the involution on the Hecke algebra $\mathcal H(G,K)$ to be denoted by the same symbol:
$$h^\vee(g):= h(g^{-1}),$$ 
because the latter corresponds to inverting the Satake parameter and, as we noticed in a remark above, $\delta_{(X)}$ is $W$-conjugate to $\delta_{(X)}^{-1}$. 

\begin{definition} \label{defSatake}

The \emph{inverse Satake transform} is the map:
\begin{equation} 
\Sat^{-1}: \CC[\delta_{(X)}^\frac{1}{2}\check A_{X, GN}]^{W_X} \ni \hat h\mapsto h^\vee\star \Phi^0 \in \mathcal S(X)^K 
\end{equation}
The map is injective by the torsion-freeness statement of Theorem \ref{factorsthrough}. The \emph{Satake transform} $\Sat$ is the inverse of this map, defined, of course, only on its image. 

\end{definition}

\begin{remark}
When $\tilde\Lambda_X=\Lambda_X \Leftrightarrow \check A_X = \check A_{X,GN}$, the ring $\mathcal H_X$ can also be identified with the ring of invariant polynomials on the \emph{dual group} $\check G_X$ of $X$ described in \cite{SV}. We will be writing again:
$$ \mathcal H_X \simeq \CC[\delta_{(X)}^\frac{1}{2}\check G_X]^{\check G_X},$$
to remind of the shift when we restrict invariant functions on $\check G$. Notice that by \cite[\S 3.3]{SV} the dual group of $X$ and the element $\delta_{(X)}^\frac{1}{2}$ commute in $\check G$ (where $\delta_{(X)}^\frac{1}{2} = e^{2\rho_{L(X)}} (q^{-\frac{1}{2}})$ in the notation of \emph{loc.\ cit.}), therefore $\check G$-invariants will indeed restrict to $\check G_X$-invariants on $\delta_{(X)}^\frac{1}{2}\check G_X$.
\end{remark}

\begin{example}\label{examplegroup2}  In the group case we have $\check G_X \simeq \check H$, but one must decide whether it is embedded as: $\check H\ni z\mapsto (z,z^c)\in \check H\times \check H$ or $\check H\ni z \mapsto (z^c,z)\in \check H$, where the exponent $~^c$ denotes the Chevalley involution fixing the canonical pinning. This choice has to be done in accordance with the identification $\Lambda_X\simeq \Lambda_H$ as explained in Example \ref{examplegroup1}. In the first case, our Satake transform is \emph{dual} to the usual one (i.e.\ differs by the involution $h\mapsto h^\vee$ on the Hecke algebra), while in the second it is equal to the usual Satake transform.  \end{example}

\section{Boundary degenerations and asymptotics} \label{sec:boundary}

To each $X$ we can associate a horospherical $G$-variety $Y$, denoted $X_\emptyset$ in \cite{SV}, called its (most degenerate) ``boundary degeneration''. We will take it to be homogeneous, in which case it is characterized by the following properties:
\begin{itemize}
 \item $Y$ is homogeneous and horospherical (i.e.\ stabilizers contain maximal unipotent subgroups);
 \item $P(X)=P(Y)$; notice that $P(Y)$ is maximal such that the stabilizer of a point of $Y$ contains the commutator $[P,P]$, where $P$ is a parabolic subgroup \emph{opposite} to $P(Y)$;
 \item $\Lambda_X=\Lambda_Y$ and $\tilde \Lambda_X = \tilde \Lambda_Y$.
\end{itemize}

The Cartan-Iwasawa decomposition for $Y$ states:
\begin{equation}\label{CartanY}
 Y/K \leftrightarrow \tilde\Lambda_Y=\tilde\Lambda_X.
\end{equation}
Evidently, such a bijection can be shifted by any element of the $G$-automorphism group of $Y$ (hence, by any element of $\tilde\Lambda_Y$), but we fix it once and for all in order to state the following theorems; there is a more ``geometric'' realization of $Y$ as an open orbit in a normal bundle, which leads to a rigidification of this decomposition relative to the Cartan decomposition for $X$ (s.\ the proof of Theorem \ref{thmasymp}).

\begin{example}
 In the group case, $X=H$, $G=H\times H$, the boundary degeneration $Y$ is isomorphic to: 
 $$ A^\diag(N\backslash H \times N^-\backslash H),$$
 where $B=AN, B^-=AN^-$ are two opposite Borel subgroups of $H$. (There is, of course, no obvious reason here to present it like that since $B$ and $B^-$ are conjugate; however, this is the presentation that generalizes to the intermediate boundary degenerations, which will not be used in this paper.)
\end{example}

\begin{remark} \label{remarktorusaction}
The ``universal Cartan'' $A_Y=A_X$ of $Y$ acts on $Y$ ``on the left''. We clarify the conventions, which can be a source of confusion. The variety $Y$ is isomorphic to $U^- S \backslash G$, where $U^-$ is the unipotent radical of a parabolic in the class of parabolics \emph{opposite} to $P(X)$ and $S$ is a subgroup of the corresponding Levi $L(X)$ which contains the commutator of the Levi. In this presentation, the universal Cartan of $X$ is: 
$$A_X = L(X)/S \twoheadleftarrow P(X)/U[L(X),L(X)] \twoheadleftarrow B/N = A,$$ where $U$ is the unipotent radical of $P(X)$ and $A$ is the universal Cartan of $G$.

This shows what the natural definition for the action of $A_X$ is, namely, lifting an element $a\in A_X$ to an element $\tilde a \in L(X)$ we have:
$$ a\cdot U^- S x := U^- S \tilde a x \in U^- S \backslash G = Y.$$

For example, if we have a presentation $Y\simeq N\backslash G$ for some maximal unipotent subgroup $N$, we \emph{should not} identify $A_Y=A_X=A$ with the quotient $B/N$, where $B$ is the normalizer of $N$, and let it act in the obvious way via this identification. Instead, if $B$ is our fixed Borel then we should present $Y$ as $N^-\backslash G$ for some unipotent radical $N^-$ of a parabolic $B^-$ \emph{opposite} to $B$, identify $A=B/N$ with the intersection of $B$ and $B^-$, and let it act ``on the left'' as a subgroup of $B^-$. The two actions differ by the action of the longest Weyl element on $A$.

We will return to the $A_X$-action on $Y$ in section \ref{sec:inverseSatake}.
\end{remark}

The basic theorem of asymptotics, restricted to $K$-invariants, is:

\begin{theorem}\label{thmasymp}
 There is a unique $\mathcal H(G,K)$-equivariant morphism:
 $$\Asymp: C^\infty(X)^K\to C^\infty(Y)^K$$
 with the property that, for any $\check\lambda$ ``deep enough'' in $\tilde\Lambda_X^+$, we have:
 $$ \Phi(x_{\check\lambda} K) = \Asymp(y_{\check\lambda} K),$$
 where we denote $\check\lambda \mapsto x_{\check\lambda} K$, resp.\ $y_{\check\lambda} K$, the Cartan decomposition for $X$ (resp.\ $Y$).
\end{theorem}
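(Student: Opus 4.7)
The plan is to establish existence and uniqueness separately, both by leveraging the general theory of asymptotics on spherical varieties from \cite{SV} together with the structure of the Cartan decomposition. (I read the statement as $\Phi(x_{\check\lambda}K) = \Asymp(\Phi)(y_{\check\lambda}K)$.)

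For existence, I would construct $\Asymp$ as the $K$-invariant restriction of a general $G(F)$-equivariant asymptotic map $C^\infty(X) \to C^\infty(Y)$, as produced in \cite[\S 5]{SV}. The key geometric input is the realization of $Y$ as the open $G$-orbit in the normal bundle to a closed $G$-orbit of the wonderful compactification $\overline X$; deforming $X$ to $Y$ along this normal cone furnishes a canonical identification of the deep $K$-orbit stratum of $X/K$ with its counterpart in $Y/K$, which implements the rigidification $x_{\check\lambda}K \leftrightarrow y_{\check\lambda}K$ alluded to in the statement. With this set up, the compatibility $\Phi(x_{\check\lambda}K) = \Asymp(\Phi)(y_{\check\lambda}K)$ for $\check\lambda$ deep enough is tautological, and $\mathcal H(G,K)$-equivariance is inherited from the $G(F)$-equivariance of the asymptotic map.

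For uniqueness, let $\Delta = \Asymp_1 - \Asymp_2$ be the difference of two candidates. Then $\Delta$ is $\mathcal H(G,K)$-equivariant and for every $\Phi \in C^\infty(X)^K$ the image $\Delta(\Phi) \in C^\infty(Y)^K$ vanishes on $y_{\check\lambda}K$ for all $\check\lambda$ deep in $\tilde\Lambda_X^+$. Since $Y$ is horospherical, the Hecke action on $C^\infty(Y)^K \simeq C^\infty(\tilde\Lambda_X)$ is particularly transparent: in the basis of characteristic functions of $K$-orbits, a Hecke operator $T_{\check\lambda_0}$ associated to a cocharacter $\check\lambda_0$ acts by shifting the parameter in $\tilde\Lambda_X$ by the image of $\check\lambda_0$, plus strictly ``lower'' contributions with respect to the natural partial order. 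Given any $\check\mu \in \tilde\Lambda_X$, one applies $\mathcal H(G,K)$-equivariance to obtain $(h \star \Delta(\Phi))(y_{\check\lambda}K) = \Delta(h\star \Phi)(y_{\check\lambda}K) = 0$ for deep $\check\lambda$, and then triangular inversion in $h$ (using the above upper-triangular structure) extracts $\Delta(\Phi)(y_{\check\mu}K) = 0$.

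The main obstacle is existence: one must actually extend the deep-stratum prescription $\Phi(x_{\check\lambda}K) \mapsto y_{\check\lambda}K$ to a smooth, $\mathcal H(G,K)$-equivariant function on all of $Y$, and not merely on the deep part. This is where the wavefront hypothesis enters in a decisive way, since it is the precise condition which ensures that the antidominant chamber of $\mathfrak a$ surjects onto $\mathfrak a_X^+$, and hence that the inductive asymptotics construction of \cite[\S 5]{SV} pushes enough $K$-invariant data out of the deep cone to cover the rest of $Y$.
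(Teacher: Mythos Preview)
Your overall strategy matches the paper's: both existence and the general framework come from \cite[Theorem 5.1.2]{SV}, and $Y$ is realized as the open $G$-orbit in a normal bundle. However, you call the compatibility $\Phi(x_{\check\lambda}K)=\Asymp(\Phi)(y_{\check\lambda}K)$ ``tautological'' once the normal-cone picture is in place, and this is precisely the point the paper does \emph{not} take for granted. The asymptotics map of \cite{SV} is defined via an abstract ``exponential map'' from a neighborhood in the normal bundle to $X$; what must be checked is that this exponential map carries the Cartan subtorus $A_X\subset Y$ (hence the labels $y_{\check\lambda}K$) to the Cartan subtorus $A_X\subset X$ (hence the labels $x_{\check\lambda}K$), so that the two parametrizations by $\tilde\Lambda_X^+$ actually line up. The paper supplies this via the Brion--Luna--Vust local structure theorem: it produces a $P(X)$-stable open slice $S\simeq \overline{A_X}\times U_{P(X)}$ in a smooth toroidal compactification $\bar X$, on which both the normal-bundle embedding and the $p$-adic analytic exponential map are the identity on the $A_X$-factor. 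That is the substantive content of the paper's proof, and it is missing from your sketch.

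Your separate uniqueness argument (triangularity of the Hecke action on $C^\infty(Y)^K$ together with equivariance) is reasonable and more explicit than the paper, which simply absorbs uniqueness into the citation of \cite{SV}. One caution: the Hecke algebra acts on $C^\infty(Y)^K$ only through its $W$-symmetric image in $\CC[\Lambda_X]$, so the ``shift plus lower terms'' picture and the inductive inversion need the wavefront hypothesis (surjectivity of $\mathfrak a^+\to\mathfrak a_X^+$) to guarantee that enough translations are available to reach every $\check\mu\in\tilde\Lambda_X$ from the deep cone. You do flag wavefront for existence, but it is equally relevant here.
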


``Deep enough'' or ``large'' will be used invariantly to signify that the given elemens of a commutative monoid are sufficiently far from its ``walls''.

\begin{proof}
This is \cite[Theorem 5.1.2]{SV}, where this map is denoted by $e_\emptyset^*$, up to showing that the isomorphism: $$\tilde\Lambda_X \simeq \tilde\Lambda_Y$$
can be chosen so that the association induced by the Cartan decomposition:
$$x_{\check\lambda} K  \mapsto y_{\check\lambda} K \,\,\, (\check\lambda\in \tilde\Lambda_X)$$
is compatible with the ``exponential map'' in the sense of \emph{loc.cit.}, \S 4.3. 

Let $\bar X$ be a smooth toroidal embedding of $X$, and let $Z$ be any $G$-orbit in $\bar X$ whose normal bundle contains a subvariety isomorphic to $Y$ (necessarily as its open $G$-orbit). By the local structure theorem of Brion-Luna-Vust (s.\ \emph{loc.cit.} Theorem 2.3.4), there is a $P(X)$-stable open subset $S\subset \bar X$, meeting every $G$-orbit, which is $P(X)$-equivariantly isomorphic to $\overline{A_X} \times U_{P(X)}$, where $\overline{A_X}$ denotes the closure of $A_X$ in $S$. Thus, $\overline{A_X}$ is a smooth toric variety, from which it is easy to see that there is a $P(X)$-equivariant open embedding: 
\begin{equation}\label{emb} A_X\times U_{P(X)} \hookrightarrow N_{S\cap Z} S\end{equation}
(normal bundle to $S\cap Z$ in $S$) and a $p$-adic analytic map:
$$ \xymatrix{
N_{S\cap Z} S \ar[r]^\varphi & S \\
 A_X \times U_{P(X)} \ar@{^{(}->}[u] \ar[r] & A_X\times U_{P(X)} \ar@{^{(}->}[u]
}$$
which is the identity on $S\cap Z$ and on its normal bundle, and the identity on the lower horizontal arrow of the above diagram. 

We can now identify $Y$ with the open $G$-orbit in $N_Z \bar X$ and the subvariety $A_X$ of \eqref{emb} with a Cartan subtorus of $Y$, and under this identification we have:
$$ \varphi(y_{\check\lambda} A(\mathfrak o)) = x_{\check\lambda} A(\mathfrak o),$$
in particular the association:
$$ x_{\check\lambda} K  \mapsto y_{\check\lambda} K$$
is compatible with the exponential map of \emph{loc.cit}.\ \S 4.3.
\end{proof}

It is easily seen from the defining property that $\Asymp$ is dual to a morphism:
\begin{equation}\label{eTheta} 
\Asymp^*:\mathcal M(Y)\to \mathcal M(X),
\end{equation} 
where $\mathcal M(\bullet)$ denotes spaces of \emph{compactly supported smooth measures}, with the property that $1_{y_{\check\lambda} K} \mapsto 1_{x_{\check\lambda} K}$ for large $\check\lambda\in \tilde\Lambda_X^+$ (where $1_S$ denotes the characteristic measure of an open compact subset $S$). 

\begin{remark} In \cite{SV} this map (denoted $e_\emptyset$) was defined between spaces of functions, but here it's more convenient to define it on spaces of measures, thus avoiding some factors in the formulas that follow as well as the need to fix a $G$-eigenmeasure. We point the reader's attention to the fact that $1_{x_{\check\lambda} K}$ etc.\ denote \emph{characteristic measures}, not functions.)
\end{remark}

\section{Range of validity of asymptotics} \label{sec:validity}

We remain, for now, in the general setting where $\tilde \Lambda_X$ is not necessarily equal to $\Lambda_X$; more precisely, $\Lambda_X$ is the quotient of $\tilde\Lambda_X$ by its torsion subgroup. Hence, the complexified dual:
$$ \tilde A_X:= \Hom(\tilde\Lambda_X,\CC^\times)$$
has the natural structure of a complex algebraic group, whose identity component is the torus $\check A_X = \Hom(\Lambda_X,\CC^\times)$. We have natural morphisms:
\begin{equation}
\check A_X \hookrightarrow \tilde A_X \overset{(**)}{\twoheadrightarrow} \check A_{X,GN} \hookrightarrow \check A,
\end{equation}
where the arrow in the middle is obtained by restricting a character to the image of $A(F)$. We let $\tilde\chi\mapsto \chi$ denote the map $(**)$, and we let $\mathfrak f$ denote the kernel of (**); it is the finite group of characters of $A_X(F)$ trivial on the image of $A(F)$.

The following result is proven in \cite{SaSph} under the assumptions of \S \ref{sec:assumptions}. 

\begin{theorem}\label{generaleigenfunctions}
There is a rational family\footnote{A ``rational family'' can be defined as an element of $\Hom (\mathcal M(X)^K, \CC[\tilde A_X]) \otimes_{\CC[\tilde A_X]} \CC(\tilde A_X)$; equivalently, it is a $\CC(\tilde A_X)$-valued function on $X/K$, with only a \emph{finite} number of poles.} of $\mathcal H(G,K)$-eigenfunctions $\tilde A_X \ni \tilde\chi\mapsto \Omega_{\tilde\chi}$ on $X$, with the following properties:
\begin{enumerate}
\item In terms of the Cartan decomposition, $\Omega_{\tilde\chi}$ has the form:
\begin{equation}\label{Omega} \Omega_{\tilde\chi} (x_{\check\lambda}) = q^{\left<\rho_{P(X)},\check\lambda\right>} \sum_{w\in W_X} \sum_{\psi\in \mathfrak f} a_w^\psi(\tilde\chi) (\psi\tilde\chi)(e^{w\check\lambda}),
\end{equation}
for certain rational coefficients $a_w^\psi$, where $\rho_{P(X)}$ is the half-sum of roots in the unipotent radical of $P(X)$. (We use exponential notation when elements of $\tilde\Lambda_X$ are considered as homomorphisms: $\tilde A_X\to \CC^\times$.)

\item  $\mathcal H(G,K)$ acts on $\Omega_{\tilde\chi}$ via the character $\chi\delta_{(X)}^\frac{1}{2}$ (identified with its image in $\check A/W$).

\item The specializations of $\Omega_{\tilde\chi}$ at any Zariski dense subset of $\tilde A_X$ where they are defined span a dense subspace of $\left(\mathcal M(X)^K\right)^*$; in other words, if $\left<\Omega_{\tilde\chi}, \mu\right>=0$ for $\tilde\chi$ in a Zariski dense subset, then $\mu\in \mathcal M(X)^K$ is zero. 
\end{enumerate}

\end{theorem}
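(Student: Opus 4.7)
The plan is to realize $\Omega_{\tilde\chi}$ as a matrix coefficient of the unramified principal series attached to $\tilde\chi$, and then read off properties (1) and (2) by combining the basic properties of the spherical vector with the asymptotics map $\Asymp$ of Theorem \ref{thmasymp}. Concretely, an unramified character $\tilde\chi \in \tilde A_X$ determines, via \eqref{shift}, an unramified character $\chi\delta_{(X)}^{\frac{1}{2}}$ of $A$, and hence an unramified principal series $I(\chi\delta_{(X)}^{\frac{1}{2}})$ with canonical spherical vector $v_K$. I would set $\Omega_{\tilde\chi}(x) := \ell_{\tilde\chi}(\pi_{\tilde\chi}(g) v_K)$ where $x = x_0 \cdot g$, and $\ell_{\tilde\chi}$ is an $X$-invariant functional on $I(\chi\delta_{(X)}^{\frac{1}{2}})$. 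Under the assumptions of \S\ref{sec:assumptions}, such a functional exists and depends rationally on $\tilde\chi$: it is obtained by integration over the open $B$-orbit in the affine-homogeneous case, and combined with a Jacquet-type integral in the Whittaker-induced case.

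Property (2) is then immediate from the construction, since $v_K$ is a Hecke eigenvector with Satake parameter $\chi\delta_{(X)}^{\frac{1}{2}}$. For property (1), I would use that $\Omega_{\tilde\chi}$ agrees with its horospherical counterpart $\Omega^Y_{\tilde\chi}$ on large Cartan elements (by Theorem \ref{thmasymp}), and on $Y \simeq U^- S \backslash G$ the analogous eigenfunction is literally a finite sum of unramified characters, namely the values of $\tilde\chi$ on $\tilde\Lambda_Y = \tilde\Lambda_X$. The sum over $W_X$ reflects Casselman's asymptotic expansion: a single rational eigenfunction on $X$ has boundary terms on $Y$ indexed by the little Weyl group. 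The sum over $\psi \in \mathfrak f$ arises because the map $\tilde A_X \to \check A_{X,GN}$ has kernel $\mathfrak f$, and the distinct $\mathfrak f$-translates of $\tilde\chi$ restrict to the same character of $A(F)$ (hence contribute to the same Hecke eigensystem) but exhibit independent exponentials on $\tilde \Lambda_X$. The factor $q^{\left<\rho_{P(X)},\check\lambda\right>}$ records the shift between the standard $P(X)$-normalization of induced representations and the Cartan decomposition on $X$.

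For property (3), suppose $\mu \in \mathcal M(X)^K$ pairs to zero against $\Omega_{\tilde\chi}$ for all $\tilde\chi$ in a Zariski dense subset. Expanding the pairing via (1) yields a rational function of $\tilde\chi$ that vanishes on a Zariski dense set and is therefore identically zero. Mellin inversion on the torus $\tilde A_X$ then extracts the values of $\mu$ on the characteristic measures $1_{x_{\check\lambda} K}$ from the coefficients $a_w^\psi$: for $\check\lambda$ deep in $\tilde\Lambda_X^+$ the map $\Asymp^*$ of \eqref{eTheta} identifies these with the corresponding values on $Y$, where unique determination by the Mellin transform is standard. One then descends from the large cone to all of $X/K$ by an inductive argument using the Hecke-module structure and the torsion-freeness of Theorem \ref{factorsthrough}.

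The main obstacle is the rational construction of $\ell_{\tilde\chi}$ and the explicit determination of the coefficients $a_w^\psi(\tilde\chi)$, including the precise role of the finite group $\mathfrak f$. Controlling the poles of the intertwining-type integrals defining $\ell_{\tilde\chi}$, and matching its boundary behavior with the $W_X$-action on $\tilde A_X$, requires the combinatorial hypotheses of \S\ref{sec:assumptions} beyond the two conditions highlighted there; this is the technical core of \cite{SaSph}, which supplies the analysis needed in the generality stated.
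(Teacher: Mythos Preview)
The paper's own proof is a one-line citation: parts (1) and (2) are \cite[Theorem 4.2.2]{SaSph}, and part (3) is \cite[Theorem 6.1.1]{SaSpc}. Your sketch correctly identifies the underlying mechanism of \cite{SaSph}, namely realizing $\Omega_{\tilde\chi}$ as a generalized matrix coefficient $\ell_{\tilde\chi}(\pi(g)v_K)$ for an invariant functional $\ell_{\tilde\chi}$ on the unramified principal series, so that (2) is automatic and the structural features of (1) (the sum over $W_X$, the appearance of $\mathfrak f$, the factor $q^{\langle\rho_{P(X)},\check\lambda\rangle}$) are those of a Casselman-type expansion.

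However, your argument for (1) has a genuine logical gap in the context of this paper. You propose to invoke Theorem~\ref{thmasymp} to match $\Omega_{\tilde\chi}$ with its horospherical counterpart, but Theorem~\ref{thmasymp} only asserts this match for $\check\lambda$ \emph{deep enough} in $\tilde\Lambda_X^+$. Formula~\eqref{Omega}, by contrast, is claimed for \emph{all} $\check\lambda\in\tilde\Lambda_X^+$. Closing this gap is precisely the content of Proposition~\ref{exp} and Corollary~\ref{corollaryvalidity}, which the paper proves \emph{using} Theorem~\ref{generaleigenfunctions} as input. So deducing \eqref{Omega} from asymptotics would be circular here. The actual argument of \cite{SaSph} runs in the opposite direction: the formula \eqref{Omega} is established directly, for every $\check\lambda\in\tilde\Lambda_X^+$, by computing $\ell_{\tilde\chi}$ on the basis of Iwahori-fixed vectors via the Iwahori--Hecke algebra (a Casselman--Shalika type calculation using the decomposition $x_{\check\lambda}J = x_{\check\lambda}(J\cap B)$ from the assumptions of \S\ref{sec:assumptions}), and only afterwards is this used to push the validity of asymptotics down to the walls.

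For (3), the density statement in \cite{SaSpc} is not proved by the Mellin-plus-descent scheme you outline; it is a general fact about the unramified spectrum of $X$, independent of the explicit form \eqref{Omega}. Your approach might be made to work, but the ``inductive descent'' step is doing real work that you have not supplied.
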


\begin{proof}
This is \cite[Theorem 4.2.2]{SaSph} (notice that $\rho_{P(X)}=\rho$ on $\tilde \Lambda_X$), except for the density statement which is \cite[Theorem 6.1.1]{SaSpc}

\end{proof}

\begin{remark} 
 The notation here is slightly different from \emph{loc.cit.}, where $\tilde\chi$ is a character of a certain subgroup $R\subset A(\bar F)$, namely the subgroup of elements which map to $A_X(F)$ under the quotient map: $A\to A_X$. The character $\tilde\chi$ in \emph{loc.\ cit}.\ was varying over all characters of $R$ which extend elements of $\delta_{(X)}^\frac{1}{2} \check A_{X,GN}$ on $A(F)$.  
The above formula is derived from formula (4.2) of \emph{loc.cit}.\ which involves the characters $^w\tilde\chi\delta^{-\frac{1}{2}}$ which \emph{do} descend to characters of $A_X(F)$; more precisely, the character $^w\tilde\chi\delta^{-\frac{1}{2}}$ of \emph{loc.\ cit}.\ is equal to what we presently denote by $\delta_{P(X)}^{-\frac{1}{2}} {^w\tilde\chi}$, which explains the passage from one formula to the other.
\end{remark}

We are ready to draw our first conclusion:

\begin{proposition}\label{exp}
 The morphism $\Asymp^*:\mathcal M(Y)\to \mathcal M(X)$, which a priori maps $1_{y_{\check\lambda} K}$ to $1_{x_{\check\lambda} K}$ only for ``large'' $\check\lambda\in \tilde\Lambda_X^+$, actually has this property for \emph{every} $\check\lambda\in \tilde\Lambda_X^+$.
\end{proposition}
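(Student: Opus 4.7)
My plan is to reduce the identity of measures to a family of scalar identities, by pairing with the eigenfunctions $\Omega_{\tilde\chi}$ of Theorem \ref{generaleigenfunctions}, and then to exploit the particularly transparent structure of $\mathcal H(G,K)$-eigenfunctions on the horospherical variety $Y$. Set $\mu_{\check\lambda} := \Asymp^*(1_{y_{\check\lambda}K}) - 1_{x_{\check\lambda}K} \in \mathcal M(X)^K$. By the density statement in Theorem \ref{generaleigenfunctions}(3), showing $\mu_{\check\lambda}=0$ reduces to showing that $\langle \Omega_{\tilde\chi}, \mu_{\check\lambda}\rangle = 0$ for $\tilde\chi$ ranging over a Zariski dense subset of $\tilde A_X$.

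By adjointness of $\Asymp^*$ with $\Asymp$ and $\mathcal H(G,K)$-equivariance of the latter,
$$\langle \Omega_{\tilde\chi}, \Asymp^*(1_{y_{\check\lambda}K})\rangle = \langle \Asymp(\Omega_{\tilde\chi}), 1_{y_{\check\lambda}K}\rangle,$$
with $\Asymp(\Omega_{\tilde\chi})$ a $K$-invariant $\mathcal H(G,K)$-eigenfunction on $Y$ of eigenvalue $\chi\delta_{(X)}^{\frac{1}{2}}$. Because $Y$ is horospherical and carries an $A_X$-action commuting with $G$, every such eigenfunction decomposes as a finite sum of $A_X$-characters $\psi\cdot{}^w\tilde\chi$ (for $w\in W_X$ and $\psi\in\mathfrak f$), weighted by the modular factor $q^{\langle\rho_{P(X)},\cdot\rangle}$---i.e.\ precisely the shape of the right-hand side of \eqref{Omega}. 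For generic $\tilde\chi$ these characters are linearly independent as functions of $\check\mu\in\tilde\Lambda_X$, so their coefficients are pinned down by the values at large $\check\mu$, where Theorem \ref{thmasymp} identifies $\Asymp(\Omega_{\tilde\chi})(y_{\check\mu})$ with $\Omega_{\tilde\chi}(x_{\check\mu})$. Matching against \eqref{Omega} then yields
$$\Asymp(\Omega_{\tilde\chi})(y_{\check\mu}) = \Omega_{\tilde\chi}(x_{\check\mu}) \qquad \text{for every } \check\mu\in\tilde\Lambda_X^+,$$
not only for large $\check\mu$.

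Specializing at $\check\mu=\check\lambda$ and unpacking the characteristic-measure pairings gives
$$\langle \Omega_{\tilde\chi}, \mu_{\check\lambda}\rangle = \Omega_{\tilde\chi}(x_{\check\lambda})\bigl(\vol(y_{\check\lambda}K)-\vol(x_{\check\lambda}K)\bigr).$$
Since $\tilde\chi\mapsto\Omega_{\tilde\chi}(x_{\check\lambda})$ is not identically zero as a rational function of $\tilde\chi$, the vanishing of this pairing on a Zariski dense set is equivalent to $\vol(y_{\check\lambda}K)=\vol(x_{\check\lambda}K)$. The principal obstacle is thus establishing this volume equality for every $\check\lambda\in\tilde\Lambda_X^+$, not merely for the large $\check\lambda$ already handled by Theorem \ref{thmasymp}; I expect it to follow from the $G$-eigenmeasures on $X$ and $Y$ being calibrated so that the identity holds at large $\check\lambda$, combined with a Macdonald-type formula for orbit volumes whose $\check\lambda$-dependence is governed only by $P(X)=P(Y)$ and the universal Cartan (which are shared between $X$ and $Y$). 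Once this volume identification is in hand, the density argument closes the loop and delivers $\mu_{\check\lambda}=0$.
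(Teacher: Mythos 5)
Your overall strategy — pairing against the eigenfunctions $\Omega_{\tilde\chi}$, invoking the density statement of Theorem \ref{generaleigenfunctions}(3), and then using the fact that an $\mathcal H(G,K)$-eigenfunction on the horospherical $Y$ is a finite sum of $A_X$-characters pinned down by its values at large $\check\mu$ — is exactly the paper's proof. The problem is the volume factor you end up with, which should not be there and which you then do not actually eliminate.

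The phrase ``characteristic measure of $S$'' in this paper means the unique $K$-invariant \emph{probability} measure supported on the $K$-orbit $S$: this is the canonical normalization that requires no choice of $G$-eigenmeasure, and Remark \ref{eTheta}'s surrounding text explicitly says the measure-theoretic formulation is adopted precisely to avoid ``the need to fix a $G$-eigenmeasure.'' (It is also the only normalization under which the assertion $1_{y_{\check\lambda}K}\mapsto 1_{x_{\check\lambda}K}$ for large $\check\lambda$ follows ``easily'' by duality from $\Phi(x_{\check\lambda})=\Asymp\Phi(y_{\check\lambda})$.) With this normalization, $\langle 1_S, f\rangle = f(S)$ for $K$-invariant $f$, so once you have established $\Asymp(\Omega_{\tilde\chi})(y_{\check\lambda}) = \Omega_{\tilde\chi}(x_{\check\lambda})$ for all $\check\lambda\in\tilde\Lambda_X^+$, the pairing $\langle\Omega_{\tilde\chi},\mu_{\check\lambda}\rangle$ is already zero — there is no residual factor $\bigl(\vol(y_{\check\lambda}K)-\vol(x_{\check\lambda}K)\bigr)$ to dispose of, and the argument closes immediately.

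You have instead read $1_S$ as a $G$-eigenmeasure restricted to $S$, which imports volumes that depend on a choice of eigenmeasure, and you then leave the resulting equality of volumes as an unproven expectation. That is a genuine gap, and the ``Macdonald-type formula'' heuristic you offer would not even be true in that interpretation: near the walls of the Weyl chamber, $K$-orbit volumes on $X$ pick up stabilizer-dependent correction factors (visible already in the group case in the classical formula for $|K\check\lambda(\varpi)K/K|$) which have no counterpart on the horospherical $Y$. So as written the proposal does not establish the proposition; once the normalization is corrected, the remainder of your argument coincides with the paper's.
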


\begin{remark}
 Notice that different $\check\lambda\in \tilde\Lambda_X^+$ with the same image in $\Lambda_X^+$ can correspond to the same $K$-orbit on $X$, as we saw in example \ref{exampleSL2}.
\end{remark}

\begin{proof}
 By the defining property of $\Asymp$, $\Asymp(\Omega_{\tilde\chi})$ has to be an $\mathcal H(G,K)$-eigenfunction on $C^\infty(Y)^K$ with the same eigencharacter, and given by the formula \eqref{Omega} for all large $\check\lambda\in\tilde\Lambda_Y=\tilde\Lambda_X$. The only such eigenfunction is given by the formula \eqref{Omega} for \emph{all} $\check\lambda\in\tilde\Lambda_Y$. 
 
 By the density property, $\Asymp^*(1_{y_{\check\lambda} K})$ is characterized by the property that for (almost) all $\tilde\chi \in \tilde A_X$:
 $$ \left<\Asymp^*(1_{y_{\check\lambda} K}), \Omega_{\tilde\chi}\right> = \left<1_{y_{\check\lambda} K}, \Asymp \Omega_{\tilde\chi}\right>.$$
 
 But this formula holds for $1_{x_{\check\lambda} K}$ in place of $\Asymp^*(1_{y_{\check\lambda} K})$, for $\check\lambda\in \tilde\Lambda_X^+$, by \eqref{Omega}.
\end{proof}

\begin{corollary}\label{corollaryvalidity}
 For any $\Phi\in C^\infty(X)^K$, we have $\Phi = \Asymp (\Phi)|_{\tilde\Lambda_X^+}$ as functions on $\tilde\Lambda_X^+$.
\end{corollary}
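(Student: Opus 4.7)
The plan is to deduce the corollary directly from Proposition \ref{exp} by adjointness. The asymptotics map $\Asymp$ and its dual $\Asymp^*$ are characterized by
$$\int_Y \Asymp(\Phi)\cdot d\mu = \int_X \Phi \cdot d\Asymp^*(\mu) \quad (\Phi \in C^\infty(X)^K,\ \mu \in \mathcal M(Y)).$$
For any $\check\lambda \in \tilde\Lambda_X^+$, I would apply this with $\mu = 1_{y_{\check\lambda} K}$, the characteristic measure of a single $K$-orbit on $Y$, and then substitute the identity $\Asymp^*(1_{y_{\check\lambda} K}) = 1_{x_{\check\lambda} K}$ provided by Proposition \ref{exp}. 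Since $\Phi$ and $\Asymp(\Phi)$ are $K$-invariant, both pairings reduce to the pointwise value of the respective function on the corresponding $K$-orbit, multiplied by the total mass of the characteristic measure.

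To extract the pointwise equality from this, the normalizations on the two sides must match, i.e.\ the total masses of $1_{y_{\check\lambda} K}$ and $1_{x_{\check\lambda} K}$ must be equal. This can be verified by testing the measure identity of Proposition \ref{exp} against the constant function $1$: the constant function is an $\mathcal H(G,K)$-eigenfunction on both $X$ and $Y$, and by the uniqueness portion of Theorem \ref{thmasymp} one has $\Asymp(1) = 1$, whence adjointness yields $\vol_Y(y_{\check\lambda} K) = \vol_X(x_{\check\lambda} K)$. Combining this with the previous paragraph gives $\Phi(x_{\check\lambda} K) = \Asymp(\Phi)(y_{\check\lambda} K)$ for every $\check\lambda \in \tilde\Lambda_X^+$, which is the claimed identity of functions on $\tilde\Lambda_X^+$.

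I do not anticipate any serious obstacle here: the substantive work has already been carried out in Proposition \ref{exp}, which used the eigenfunction formula \eqref{Omega} and the density statement of Theorem \ref{generaleigenfunctions} to extend the measure equality from ``large'' $\check\lambda$ to all of $\tilde\Lambda_X^+$. The corollary merely transports that measure-theoretic statement back to the function-theoretic setting in which the defining property of $\Asymp$ in Theorem \ref{thmasymp} was originally stated.
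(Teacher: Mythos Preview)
Your proposal is correct and is precisely the argument the paper has in mind: the corollary is just Proposition \ref{exp} read through the adjointness between $\Asymp$ and $\Asymp^*$.

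One remark: the volume-matching detour is unnecessary. The passage from the defining property of $\Asymp$ in Theorem \ref{thmasymp} to the property ``$\Asymp^*(1_{y_{\check\lambda}K})=1_{x_{\check\lambda}K}$ for large $\check\lambda$'' stated immediately afterward \emph{is} the same duality you are invoking, applied at large $\check\lambda$. So whatever normalization of the characteristic measures makes that step go through without volume factors (and the paper says it does, ``easily'') makes your step go through for every $\check\lambda\in\tilde\Lambda_X^+$ as well; equivalently, the convention is that $\langle\Phi,1_{x_{\check\lambda}K}\rangle=\Phi(x_{\check\lambda})$ for $K$-invariant $\Phi$, and likewise on $Y$. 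Also, a small imprecision: the ``uniqueness'' in Theorem \ref{thmasymp} is uniqueness of the morphism $\Asymp$, not of eigenfunctions. Your claim $\Asymp(1)=1$ is nonetheless true, but it follows from the eigenfunction-on-$Y$ argument in the proof of Proposition \ref{exp} (the only Hecke eigenfunction on $Y$ with the trivial eigencharacter that equals $1$ for large $\check\lambda$ is the constant $1$), not from Theorem \ref{thmasymp}.
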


This is the key to computing explicitly the inverse Satake transforms of various functions, since it is much easier to compute the Hecke action on $C^\infty(Y)^K$, than on $C^\infty(X)^K$.

 \section{Asymptotics of the basic function} \label{sec:asymp-basic}

 From now on we assume that $X$ is affine homogeneous or Whittaker-induced from an affine homogeneous spherical variety of a Levi subgroup in the sense of \cite[\S 2.6]{SV}. We also require that $\tilde\Lambda_X=\Lambda_X$ (equivalently, $\check A_X =\check A_{X,GN}$, and we will write $\chi$ instead of $\tilde\chi$ for a character of $\Lambda_X$). The formulas that follow will involve the coroot system of $X$ (i.e.\ the root system of its dual group $\check G_X$), as normalized in \cite[\S 3.1]{SV}. The set of positive roots of $\check G_X$ will be denoted by $\check\Phi_X^+$.
 
 In the affine case, the characteristic function of $X(\mathfrak o)$ (which under our present assumptions forms a single $K$-orbit, parametrized by $0\in \Lambda_X^+$) will be denoted by $\Phi^0$.

I point the reader to \cite[\S 2.6]{SV} for the general formalism of Whittaker-induction, but the basic idea is very familiar; in our case, we start with an affine homogeneous variety $H\backslash L$ of a Levi subgroup $L$, and a \emph{generic} character $\Psi: U_P(F)\to \CC^\times$ of the unipotent radical of a parabolic with Levi $L$, such that $\Psi$ is fixed by $H$ (and hence extends to a character of $HU_P$). Then, instead of smooth functions on $X:= HU_P\backslash G$ one considers smooth sections of the induced character (which can be thought of as a complex line bundle $\mathcal L_\Psi$ over the $F$-points of $X$).  Everything that we have established so far extends to the Whittaker-induced case, with the dual group (and hence the set $\Lambda_X^+$ of anti-dominant weights) being \emph{different} from that of $X$ considered as a variety without that line bundle. In this case, the Cartan decomposition does not parametrize all $K$-orbits on $X$ but only the ``relevant'' ones (i.e.\ those which can support $K$-invariant sections of the line bundle). 
Of course, as in the usual case, we need the analogous assumptions of \S \ref{sec:assumptions} to hold for the Cartan decomposition, and they do at almost every place if $X$ is defined over a global field. If, in the presentation above, $HU_P \cdot 1\in X$ is on the orbit represented by $0\in \Lambda_X^+$,  the role of the ``basic function'' here will be played by the section $\Phi^0$ defined by:
 $$ \Phi^0(g) = \begin{cases} \Psi(h), &\,\,\mbox{ if }g=hk, h\in HU_P, k\in K;\\ 0, & \mbox{ otherwise.}\end{cases}$$
 
In either case, from now on we will require that the assumptions of \cite[Theorem 7.2.1]{SaSph} hold; as remarked in \S \ref{sec:assumptions}, this includes, and is expected to be equivalent to, 
the requirement that $X$ is affine homogeneous or Whittaker-induced from such; however, one must for now check additional combinatorial conditions in each case. The case of $X=$ a reductive group satisfies these conditions.

I will not repeat the conditions here (as they involve a long list of definitions), but they have to do with a set $\Theta^+$ of triples $(\check\theta,\sigma_{\check\theta},r_{\check\theta})$, where $\theta^+\in \Lambda_X$, $\sigma_{\check\theta}$ is $+$ or $-1$, and $r_{\check\theta}$ is a half-integer. This set is obtained from the combinatorial invariants of the spherical variety, and in particular the valuations induced by its \emph{colors} ($B$-stable divisors). I refer the reader to \cite[\S 7.1]{SaSph} for the definitions. Roughly speaking, the conditions state that $\Theta^+$ behaves like the set of positive roots of a root system with Weyl group $W_X$: it can be completed to a $W_X$-stable set (where $W_X$ acts on such triples by acting just on $\check\theta$) by inverting the $\check\theta$'s, ``loses'' a prescribed subset of elements when acted upon by a simple reflection etc. We will see some examples below. By abuse of notation, we will sometimes write $\check\theta\in \Theta$, instead of the corresponding triples.

Notice that the condition $\tilde\Lambda_X=\Lambda_X$ implies that $X(\mathfrak o)$ is a single $K$-orbit, $\check A_X = \check A_{X,GN}$, and that the Hecke eigenfunctions of Theorem \ref{generaleigenfunctions} are now parametrized by $\chi\in \check A_X$ (with no finite group $\mathfrak f$ entering in their formula). 

We recall and reformulate the statement of \cite[Theorem 7.2.1]{SaSph} under our present assumptions (more precisely, its restriction to affine or Whittaker-induced from affine cases where, in the notation of the theorem, $\omega=$ a constant):

\begin{theorem} \label{SaSphTheorem}
There is a positive constant $c$ such that the Hecke eigenfunctions $\Omega_\chi$ of Theorem \ref{generaleigenfunctions}, normalized so that their value at $X(\mathfrak o)$ is $1$, are equal to:
$$ \frac{\Omega_\chi(x_{\check\lambda})}{\Omega_\chi(x_0)} = c^{-1} q^{\left<\rho_{P(X)},\check\lambda\right>} \cdot P_{\check\lambda}(\chi),$$
where $P_{\check\lambda}$ is the $W_X$-invariant polynomial on $\check A_X$ given by:

 \begin{equation}\label{sphericalformula} P_{\check\lambda} =  \sum_{w\in W_X} \left(\frac{\prod_{\check\theta\in \Theta^+} (1-\sigma_{\check\theta} q^{-r_{\check\theta}} e^{\check\theta})}{ \prod_{\check\gamma\in\check\Phi_X^+}(1-e^{\check\gamma})} e^{\check\lambda}\right)^w
 \end{equation}

\end{theorem}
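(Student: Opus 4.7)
The statement is essentially a specialization and rescaling of \cite[Theorem 7.2.1]{SaSph}, so the plan is to transport that result into the present notation, verify that our running hypotheses trigger the reductions that simplify its form to \eqref{sphericalformula}, and then pin down the scalar $c$ by evaluating at the basepoint.

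First I would invoke Theorem \ref{generaleigenfunctions}. Under the current assumption $\tilde\Lambda_X=\Lambda_X$ the finite group $\mathfrak f$ is trivial, so the general expansion \eqref{Omega} collapses to a genuine $W_X$-symmetrization, namely
$$ \Omega_\chi(x_{\check\lambda}) = q^{\left<\rho_{P(X)},\check\lambda\right>}\sum_{w\in W_X} a_w(\chi)\,\chi(e^{w\check\lambda}), $$
with rational coefficients $a_w$. The task is to identify the $a_w(\chi)$ with the coefficients dictated by the right-hand side of \eqref{sphericalformula}.

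Next I would apply \cite[Theorem 7.2.1]{SaSph}. In the generality treated there, the coefficients $a_w$ are expressed as a product over $\Theta^+$ divided by a product over $\check\Phi_X^+$, multiplied by a factor $\omega$ coming from the Whittaker data (or, more precisely, from sections of the line bundle $\mathcal L_\Psi$ over the ``zero orbit''). The two hypotheses I am making ($X$ affine homogeneous, or Whittaker-induced from an affine homogeneous spherical variety of a Levi, together with the combinatorial conditions of loc.\ cit.) are exactly what is needed for this factor $\omega$ to be a constant independent of $\check\lambda$; in the purely affine case it is in fact $1$, while in the Whittaker-induced case it reduces to the value of $\Psi$ on the basepoint. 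Absorbing this constant together with the normalization yields the product $P_{\check\lambda}(\chi)$ of \eqref{sphericalformula}, multiplied by the universal modular factor $q^{\left<\rho_{P(X)},\check\lambda\right>}$ that was already present in \eqref{Omega}.

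Finally I would fix the scalar $c$ by evaluating at $\check\lambda=0$. The normalization $\Omega_\chi(x_0)=1$ forces $c = P_0(\chi)$, so part of what needs to be extracted from \cite[Theorem 7.2.1]{SaSph} is the statement that the $W_X$-symmetrization
$$ \sum_{w\in W_X}\left(\frac{\prod_{\check\theta\in\Theta^+}(1-\sigma_{\check\theta}q^{-r_{\check\theta}}e^{\check\theta})}{\prod_{\check\gamma\in\check\Phi_X^+}(1-e^{\check\gamma})}\right)^w $$
is a nonzero constant (as a function of $\chi\in\check A_X$). This is the Weyl-denominator-style cancellation familiar from the Macdonald spherical function formula: the apparent poles along the root hyperplanes $e^{\check\gamma}=1$ must cancel after symmetrization, leaving a polynomial, and the absence of any non-constant $W_X$-invariant polynomial of the correct low degree forces the result to be a scalar. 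The main obstacle, were one trying to reprove \cite[Theorem 7.2.1]{SaSph} from scratch rather than citing it, would be precisely to identify the residues of the rational family $\chi\mapsto\Omega_\chi$ along the walls of $\mathfrak a_X^+$ with the combinatorial data $(\Theta^+,\sigma_{\check\theta},r_{\check\theta})$ read off from the colors of $X$; loc.\ cit.\ handles this by reduction to the rank-one spherical varieties, where each color contributes one factor of the numerator product.
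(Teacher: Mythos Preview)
Your proposal is correct and follows the same route as the paper: both reduce the statement to \cite[Theorem 7.2.1]{SaSph} and its formula (7.4), noting that under the present hypotheses (affine or Whittaker-induced from affine, $\tilde\Lambda_X=\Lambda_X$) the auxiliary factor $\omega$ there is constant and the finite group $\mathfrak f$ disappears. The paper's own proof is in fact terser than yours---it is a bare citation with a pointer to the relevant formula.

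One small comment on your heuristic for why $P_0$ is a constant: the degree argument you sketch (``no non-constant $W_X$-invariant polynomial of the correct low degree'') is not really the mechanism at work, since there is no a priori degree bound visible from \eqref{sphericalformula}. The actual reason, which the paper alludes to, is that in \cite{SaSph} the $P_{\check\lambda}$ are rewritten as linear combinations of Schur polynomials for $\check G_X$; polynomiality and the constancy of $P_0$ then drop out of that expansion (the only Schur polynomial that can appear for $\check\lambda=0$ is the trivial one). Since you are in any case deferring to the cited theorem for this point, your argument stands; just be aware that the degree heuristic would not survive as an independent proof.
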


\begin{proof}
This is a restatement of \emph{loc.cit}.\ (7.4); see the second formula of the proof for the reformulation that we have presented here. The fact that the $P_{\check\lambda}$s are polynomials can easily be seen from (7.4), where they are expressed in terms of Schur polynomials.
\end{proof}

Moreover: 
\begin{proposition}\label{basis}
The polynomials $P_{\check\lambda}$, for $\check\lambda$ varying over the antidominant elements of $\Lambda_X$, form a basis for the $W_X$-symmetric polynomials on $\check A_X$. 
\end{proposition}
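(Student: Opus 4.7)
The plan is to reduce the claim to a triangularity statement. The algebra $\CC[\check A_X]^{W_X}$ has as $\CC$-basis the orbit sums $m_{\check\mu} := \sum_{\check\mu' \in W_X \check\mu} e^{\check\mu'}$, or equivalently the Schur polynomials $\chi_{\check\mu}$ of $\check G_X$, indexed by $\check\mu \in \Lambda_X^+$. Since the $P_{\check\lambda}$ are indexed by the same set, the proposition follows once I show that the change-of-basis from $\{P_{\check\lambda}\}$ to $\{\chi_{\check\mu}\}$ (or equivalently, to $\{m_{\check\mu}\}$) is triangular with nonzero diagonal with respect to the coroot partial order
$$ \check\mu \preceq \check\lambda \iff \check\lambda - \check\mu \in \sum_{\check\gamma \in \check\Phi_X^+} \ZZ_{\geq 0}\,\check\gamma. $$

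For the triangularity I would use the explicit Schur-polynomial expansion of $P_{\check\lambda}$ from the proof of Theorem \ref{SaSphTheorem} via formula (7.4) of \cite{SaSph}. Heuristically, expanding the numerator $\prod_{\check\theta \in \Theta^+}(1 - \sigma_{\check\theta}q^{-r_{\check\theta}} e^{\check\theta})$ into a sum of monomials indexed by subsets $I \subseteq \Theta^+$, and applying the Weyl denominator identity to each resulting symmetrization, one arrives at an expression of the form
$$ P_{\check\lambda} = \chi_{\check\lambda} + \sum_{\emptyset \neq I \subseteq \Theta^+} (\pm q^{-\bullet})\,\chi_{\check\lambda + \sum_{\check\theta \in I}\check\theta}, $$
where $\chi_{\check\nu}$ for non-antidominant $\check\nu$ is interpreted via the dot-action of $W_X$ (giving $\pm \chi_{\check\mu}$ for the antidominant representative, or zero). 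Since each $\check\theta \in \Theta^+$ is a non-negative integer combination of positive coroots (by the construction in \cite[\S 7.1]{SaSph}), every index $\check\lambda + \sum_{I}\check\theta$ satisfies $\succeq \check\lambda$ in the coroot order, and this persists after rewriting in characters indexed by $\Lambda_X^+$. The term $I = \emptyset$ contributes $\chi_{\check\lambda}$ with coefficient $1$, so the expansion is triangular with leading coefficient $1$ in the Schur basis; the triangularity transfers to the orbit-sum basis via the classical upper-triangularity of $\chi_{\check\mu}$ in $\{m_{\check\mu'}\}$.

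Given the triangularity, linear independence of $\{P_{\check\lambda}\}$ is immediate: in any finite dependence relation, the coefficient of the basis element indexed by a $\preceq$-minimal $\check\lambda$ is nonzero. For spanning, iteratively invert
$$ m_{\check\mu_0} = c_{\check\mu_0 \check\mu_0}(q)^{-1}\Bigl(P_{\check\mu_0} - \sum_{\check\mu \succ \check\mu_0} c_{\check\mu_0 \check\mu}(q)\,m_{\check\mu}\Bigr), $$
recursing on each $m_{\check\mu}$ on the right. The recursion terminates because the set $\{\check\mu \in \Lambda_X^+ : \check\mu \succeq \check\mu_0\}$ is finite: writing $\check\mu - \check\mu_0 = \sum t_{\check\gamma}\,\check\gamma$ with $t_{\check\gamma} \in \ZZ_{\geq 0}$, antidominance of $\check\mu$ combined with $\langle \rho^\vee, \check\gamma\rangle > 0$ for each $\check\gamma \in \check\Phi_X^+$ (where $\rho^\vee$ is half the sum of positive coroots of $\check G_X$) uniformly bounds each $t_{\check\gamma}$. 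The hard part is the Weyl-denominator manipulation yielding the Schur-polynomial expansion above, which is precisely the computation in the proof of Theorem 7.2.1 of \cite{SaSph} and is best invoked rather than redone.
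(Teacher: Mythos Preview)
Your proposal is correct and aligns with the paper's approach: the paper simply defers to the proof of \cite[Theorem 8.0.2]{SaSph}, and the triangularity argument you sketch---expanding via formula (7.4) of \cite{SaSph} into Schur polynomials and observing that all contributing indices lie above $\check\lambda$ in the coroot order---is exactly the content of that reference. One small point worth tightening if you write this out in full: the assertion that each $\check\theta\in\Theta^+$ lies in the non-negative span of $\check\Phi_X^+$ is part of the combinatorial hypotheses on $\Theta^+$ imposed in \cite[\S 7.1]{SaSph} (it ``behaves like a set of positive roots'' for $W_X$), so you are right to invoke rather than re-prove it, but it is worth stating explicitly which axiom you are using.
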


\begin{proof}
This is included in the proof of \cite[Theorem 8.0.2]{SaSph}.
\end{proof}

 \begin{example}\label{examplegroup3}
  In the group case, $X=H$, we have $\Theta^+=\check\Phi_H^+$ (positive coroots of $H$), $\sigma_{\check\theta} = +1$ and $r_{\check\theta} = 1$ for all $\check\theta$, so we get Macdonald's formula according to which:
  $$P_{\check\lambda} = \sum_{w\in W_H} \left(\prod_{\check\gamma\in\check\Phi_H^+} \frac{ 1-q^{-1} e^{\check\gamma}}{ 1-e^{\check\gamma}} e^{\check\lambda}\right)^w.$$
 \end{example}

\begin{example}
In the Whittaker case, $X=N\backslash G$, where $N$ is a maximal uniponent subgroup endowed with a nondegenerate character $\Psi$, we have: $\check G_X = \check G$, $\Theta^+=\emptyset$, and:
$$ P_{\check\lambda} = \sum_{w\in W} \left(\prod_{\check\gamma\in\check\Phi_G^+} \frac{1}{ 1-e^{\check\gamma}} e^{\check\lambda}\right)^w =  \frac{\sum_{w\in W} (-1)^{\ell(w)} e^{\check\rho_B-w \check\rho_B + w\check\lambda}}{\prod_{\check\gamma\in \check\Phi_G^+}(1-e^{\check\gamma})}, $$
where $\ell(w)$ is the length of $w$ and $\check\rho_B = \frac{1}{2} \sum_{\check\gamma\in\check\Phi_G^+} \check\gamma$. The right hand side is, of course, the character (Schur polynomial) of the irreducible representation of $\check G$ with \emph{lowest} weight $\check\lambda$.
\end{example}

\begin{example}
When $X=\Sp_{2n}\backslash \GL_{2n}$ we have $P(X)=$ the standard parabolic with Levi of type $\GL_2 \times \GL_2 \times \cdots \times \GL_2$, and the dual group $\check G_X$ is isomorphic to $\GL_n$ (embedded in $\check G=\GL_{2n}$ via the spherical roots $\alpha_1+2\alpha_2+\alpha_3$, $\alpha_3+2\alpha_4+\alpha_5, \dots$). We have:
$$ P_{\check\lambda} = \sum_{w\in W_X} \left(\prod_{\check\gamma\in \check\Phi_X^+} \frac{1-q^{-2} e^{\check\gamma}}{1- e^{\check\gamma}}\right)^w.$$

It requires a long introduction to the structure of spherical varieties (and the definition of the set $\Theta^+$) in order to explain how these are computed, but I will give a few hints: The calculation of $P(X)$ is easy, and the spherical roots can be read off from the diagrams in Luna's paper \cite{Luna}. One can then compute the $\PGL_2$-spherical varieties corresponding to each simple root $\alpha$ of $G$: these are the varieties $X^\circ P_\alpha/\mathcal R(P_\alpha)$, where $P_\alpha$ is the parabolic whose Levi has a single positive root $\alpha$, $\mathcal R(P_\alpha)$ is its radical, and $X^\circ$ is the open Borel orbit. One sees that for $\alpha_1,\alpha_3, \alpha_5$ etc.\ this $\PGL_2$-variety is a point (which is why $P(X)$ is the standard parabolic containing the negatives of those roots), while for $\alpha_2, \alpha_4, \dots$ they are of the form $N\backslash \PGL_2$, where $N$ is unipotent. This implies that $X^\circ P_{\alpha_{2i}}$ contains a \emph{color}, a $B$-stable divisor, whose valuation gives rise to the element $\check\theta=\check\alpha_{2i}$ of $\Theta^+$. Notice that $\check \alpha_{2i}$ is here equal to the coroot corresponding to the root $\alpha_{2i-1}+2\alpha_{2i}+\alpha_{2i+1}$.
The rest of the triple $(\check\theta,\sigma_{\check\theta}, r_{\check\theta}) = (\check\alpha_{2i},+1, 2)$, and the other elements of $\Theta^+$, can be computed from the definitions of \cite[\S 7.1]{SaSph}.
\end{example}

Before we continue, we need to discuss how we will denote certain functions on the horospherical boundary degeneration $Y$ (and on $X$) as rational functions on $\check A_X/W_X$.

 We introduce a basis of $\mathcal S(Y)^K$ indexed by $\Lambda_X$, where $\check\lambda\in \Lambda_X$ is associated to the function:
\begin{equation}\label{defelambda}
e^{\check\lambda}:= q^{\left<\rho_{P(X)},\check\lambda\right>} \mbox{ times the characteristic function of }y_{\check\lambda} K.
\end{equation}
We will be writing $\hat\Phi$ for the expression of an arbitrary element of $C^\infty(Y)^K$ as a series in the elements $e^{\check\lambda}$, and we will also use rational functions to denote the corresponding power series. Notice that a rational function does not correspond to a unique power series, unless extra conditions are given on the support of the power series, e.g.:
 $$ \frac{1}{1-e^{\check\alpha}}$$
 could correspond to both $\sum_{i\ge 0} e^{i\check\alpha}$ and $-\sum_{i\ge 1} e^{-i\check\alpha}$. 
 
 In what follows, we will fix a strictly convex cone $\mathcal C_X$ in $\Lambda_X$ (i.e.\ the intersection of $\Lambda_X$ with a strictly convex, finitely generated cone in the $\mathbb Q$-vector space it spans) and will require throughout that all our power series have support in a translate of this cone, without the need to repeat this condition every time. (Later, we will also introduce a larger strictly convex cone $\mathcal C_X'$, depending on the function that we want to represent; notice that as long as the latter contains the former and is strictly convex, any rational function that can be expanded as a series in a translate of $\mathcal C_X$, also corresponds unambiguously to the same series if we want to expand it in a translate of $\mathcal C_X'$.) The cone $\mathcal C_X$ is defined as follows: Recall that we assume that $X$ is affine, and we have a decomposition of the coordinate ring:
 \begin{equation}\label{coordring} F[X] = \bigoplus_\chi V_\chi
 \end{equation}
 into a multiplicity-free direct sum of highest weight modules. The set of $B$-weights appearing in this decomposition is actually a saturated monoid of the weights of the quotient torus $A_X$, and we let $\mathcal C_X$ denote the dual cone:
 $$ \mathcal C_X = \{ \check\lambda\in \Lambda_X| \left<\chi,\check\lambda\right> \ge 0 \mbox{ for all $\chi$ appearing in \eqref{coordring}}\}.$$
 
 Since the $\chi$'s appearing in \eqref{coordring} are all dominant, this cone contains the images of all positive coroots of $G$ in $\Lambda_X$.
 
 \begin{example}
  When $G=\SL_2$ and $Y=N\backslash\SL_2$, where $N$ is a maximal unipotent subgroup, and $\check\alpha$ is the positive coroot of its universal Cartan, we have that $\mathcal C_X$ is spanned by the positive coroots (there is no other possibility in this one-dimensional case, no matter what $X$ was) and $P(X)=$ the Borel subgroup. The expression:
  $$ \frac{1}{1-q^{-1} e^{\check\alpha}}$$
  stands for the characteristic function of $\mathfrak o^2\smallsetminus \{0\}$, under the identification of $Y(F)$ with $F^2\smallsetminus\{0\}$. 
  
  Indeed, first of all we expand in a power series in positive powers of $e^{\check\alpha}$, since $\mathcal C_X$ must contain positive multiples of $\check\alpha$. Secondly, we interpret $q^{-i} e^{i\check\alpha} = q^{-i \left<\rho,\check\alpha\right>} e^{i\check\alpha}$ as the characteristic function of the coset $y_{i\check\alpha} K$. Finally, for the Iwasawa decomposition of $Y$ we should fix a Borel $AN^-$ opposite from the ``standard'' one, and use an isomorphism $Y \simeq N^- \backslash G$ to represent $y_{\check\lambda}$ by $\check\lambda(\varpi)\in A(F)$. Then we immediately see that under a suitable embedding of $Y(F)$ in $F^2$ we have: $y_{i\check\alpha} K = $ the subset $(\mathfrak p^i)^2\smallsetminus (\mathfrak p^{i+1})^2$ of $F^2$.
 \end{example}
 
 Now we are ready to describe the image of the basic function under the asymptotics map. Recall that $\Phi^0\in \mathcal S(X)^K$ denotes the characteristic function of $X(\mathfrak o)$. We have the following:
 
\begin{proposition}
  The support of $\Asymp (\Phi^0)$, as a function on $Y/K = \Lambda_X$, belongs to a translate of the cone $\mathcal C_X$.
\end{proposition}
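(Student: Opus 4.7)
The plan is to reformulate the support statement as a rationality statement for the generating function of $\Asymp(\Phi^0)$. Write $\Asymp(\Phi^0) = \sum_{\check\lambda \in \Lambda_X} c_{\check\lambda} e^{\check\lambda}$ in the basis \eqref{defelambda}, and form the associated formal series $F(\chi) := \sum_{\check\lambda} c_{\check\lambda} \chi(e^{\check\lambda})$ on $\check A_X$. The desired support condition on the $c_{\check\lambda}$ is then equivalent to $F$ being a rational function whose unique Laurent expansion in the strictly convex cone $\mathcal{C}_X$ has support in a translate of $\mathcal{C}_X$.

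The key input for rationality is Hecke-equivariance. For any $h \in \mathcal{H}(G,K)$, the convolution $h \star \Phi^0 \in \mathcal{S}(X)^K$ is compactly supported on $X$, and hence by Corollary \ref{corollaryvalidity} the function $h \star \Asymp(\Phi^0) = \Asymp(h \star \Phi^0)$ has finite support on $\tilde\Lambda_X^+$. On the horospherical $Y$, the Hecke action of $h$ on $C^\infty(Y)^K$ admits an explicit description via the Iwasawa decomposition $G = P(X)^- K$ and the Jacquet constant term of $h$ along $P(X)^-$; at the level of generating series this amounts, up to modular-character shifts, to multiplication of $F(\chi)$ by the Satake transform $\hat{h}(\delta_{(X)}^{1/2}\chi)$. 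Thus for every $h$, the product $\hat{h}(\delta_{(X)}^{1/2}\chi) \cdot F(\chi)$ has only finitely many nonzero coefficients on the antidominant cone.

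Running this over a family of $h$'s rich enough to rationally generate $\mathcal{H}_X$ (using the Satake isomorphism $\mathcal{H}(G,K) \simeq \CC[\check A]^W$ and the torsion-freeness statement of Theorem \ref{factorsthrough}), these constraints pin down $F$ as a specific rational function whose denominator is built from factors of the form $1 - \sigma q^{-r} e^{\chi}$, with $\chi$ ranging over the $B$-weight generators in the decomposition of $F[X]$. Such a rational function admits a unique Laurent expansion in the $\mathcal{C}_X$-direction supported in a translate of $\mathcal{C}_X$, giving the conclusion. The hard part will be the precise identification of this denominator: matching the combinatorial $\Theta^+$-data of Theorem \ref{SaSphTheorem} to the monoid of $B$-weights of $F[X]$, and carrying out the Jacquet computation on $Y$, is essentially equivalent to the explicit formula for $\Asymp(\Phi^0)$ that the subsequent sections of the paper develop.
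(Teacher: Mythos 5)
The paper's own proof is a single citation to \cite[Proposition 5.4.5]{SV}, a geometric result: the asymptotics of a compactly supported function on the affine $X$ are supported inside the ``affine degeneration'' of $X$ (the affine embedding of $Y$ whose weight monoid coincides with that of $F[X]$), whose set of $\Lambda_X$-values is exactly a translate of $\mathcal C_X$. Your approach is genuinely different --- you try to deduce the support statement from Hecke-equivariance of $\Asymp$ and the compact support of $h\star\Phi^0$ for $h\in\mathcal H(G,K)$ --- but it has a real gap.

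The constraint you extract is: for every $\hat h\in\mathcal H_X$, the power series $\hat h\cdot F$ has only finitely many nonzero coefficients on $\Lambda_X^+$. This does \emph{not} pin down $F$, and in particular does not force $\supp F$ into a translate of $\mathcal C_X$. To see this, take $X=H=\GL_2$, so $\Lambda_X\simeq\mathbb Z^2$, $\Lambda_X^+=\{(a,b):a\le b\}$, $\mathcal C_X=\mathbb Z_{\ge0}(1,-1)$ (a ray), and $W_X=S_2$. Since $\mathcal H_X=\CC[\Lambda_X]^{S_2}$ is spanned by $z_1^{-n}z_2^{-m}+z_1^{-m}z_2^{-n}$ with $n\le m$, and since $(\hat h\cdot F)$ is already controlled on $\Lambda_X^+$ by the first term, your constraint amounts exactly to: for each fixed $k>0$, the restriction of $F$ to the affine line $\{a-b=k\}$ has finite support. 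But this is satisfied, for instance, by $F$ with $F_{(0,0)}=1$, $F_{(n,-n)}=1$ and $F_{(n,1-n)}=1$ for all $n\ge 1$, and all other coefficients zero; this $F$ has the correct restriction to $\Lambda_X^+$, meets every constraint, yet its support spreads across two parallel translates of $\mathcal C_X$ and is not contained in any single one. So ``these constraints pin down $F$'' is false; the family of $h$'s, no matter how rich, only sees the antidominant slice.

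There is also a structural circularity. You concede that ``the hard part'' is identifying the rational function --- i.e., essentially proving Theorem \ref{theoremBF}. But the paper needs Proposition \ref{exp}'s corollary and the present support statement precisely in order to make the formal power-series manipulations in the proof of Theorem \ref{theoremBF} well-posed (the paragraph after the Proposition says explicitly that it is the support statement which lets one ``unambiguously represent functions on $Y/K$ as rational functions''). So deducing the support statement from the explicit formula is deducing it from something that depends on it. The missing ingredient is the geometric input from \cite{SV}: one must use that $\Asymp$ arises from an exponential map into a normal bundle, so that the support of $\Asymp(\Phi)$ for $\Phi$ compactly supported on the affine $X$ lives in (the integral points of) the affine degeneration of $X$. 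Hecke-equivariance alone cannot substitute for this.
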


\begin{proof}
 This is \cite[Proposition 5.4.5]{SV}; s.\ also its proof, where the affine embedding containing its support is specified as the horospherical ``affine degeneration'' of $X$, i.e.\ the affine embedding of $Y$ whose coordinate ring, as a $G$-module, contains the same highest weight representations as $F[X]$. 
\end{proof} 
 
This shows that for the calculations that follow we can unambiguously represent functions on $Y/K$ as rational functions, as long as they have a power series expansion supported in a translate of $\mathcal C_X$. In the next section we will do the same with functions on $X/K$, by restricting those power series (functions on $\Lambda_X=\Lambda_Y$) to $\Lambda_X^+$.

Our basic result, now, is the following:
 
 \begin{theorem}\label{theoremBF}
  In the notation above, we have:
  \begin{equation}\label{asympBF}
   \Asymp(\Phi^0) = \frac{ \prod_{\check\gamma\in\check\Phi_X^+}(1-e^{\check\gamma})}{\prod_{\check\theta\in \Theta^+} (1-\sigma_{\check\theta} q^{-r_{\check\theta}} e^{\check\theta})} .
  \end{equation}
 \end{theorem}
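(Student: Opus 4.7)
The plan is to verify \eqref{asympBF} by pairing both sides against the Hecke eigenfunctions $\Omega_\chi$ (transported to $Y$ via $\Asymp$) and invoking the density statement of Theorem \ref{generaleigenfunctions}(3) to force equality.

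First I would make $\Asymp(\Omega_\chi)$ explicit as a function on $Y$. Since $\Asymp$ is $\mathcal H(G,K)$-equivariant, $\Asymp(\Omega_\chi)$ is a Hecke eigenfunction on $Y$ with eigencharacter $\chi\delta_{(X)}^{\frac{1}{2}}$. Because $Y$ is horospherical, such eigenfunctions form (for generic $\chi$) a $|W_X|$-dimensional space spanned by the $A_X$-torus-eigenfunctions $y_{\check\lambda}K \mapsto q^{\langle\rho_{P(X)},\check\lambda\rangle}\chi^w(\check\lambda)$ for $w\in W_X$. The defining asymptotic property of $\Asymp$, combined with Theorem \ref{SaSphTheorem} and formula \eqref{sphericalformula}, pins down the coefficients: the component of $\Asymp(\Omega_\chi)$ attached to $\chi^w$ equals $c^{-1}\Omega_\chi(x_0)\cdot(N/D)^w(\chi)$, where
$$N := \prod_{\check\theta\in\Theta^+}(1-\sigma_{\check\theta}q^{-r_{\check\theta}}e^{\check\theta}), \qquad D := \prod_{\check\gamma\in\check\Phi_X^+}(1-e^{\check\gamma}).$$

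Next I would interpret the rational function $R := D/N$ on the right-hand side of \eqref{asympBF} as an element $R_Y\in C^\infty(Y)^K$ via its formal power series expansion in the basis $\{e^{\check\lambda}\}$ of \eqref{defelambda}, supported in the cone $\mathcal C_X$. Its formal Mellin transform $\sum_{\check\lambda} r_{\check\lambda}\chi(\check\lambda)$ recovers the rational function $D(\chi)/N(\chi)$. Pairing $R_Y$ against $\Asymp(\Omega_\chi)$ by extracting Fourier coefficients in $\{\chi^w\}_{w\in W_X}$, the $W_X$-sum collapses term-by-term via the cancellation $(N/D)^w\cdot(D/N)^w = 1$, producing the constant $c^{-1}|W_X|\,\Omega_\chi(x_0)$.

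The analogous pairing applied to $\Asymp(\Phi^0)$ in place of $R_Y$ must yield the same value, by the adjunction between $\Asymp$ and $\Asymp^*$ combined with Proposition \ref{exp}: using $\Phi^0 = 1_{X(\mathfrak o)}$ and $K$-invariance of $\Omega_\chi$, the pairing reduces on the $X$-side to $\Omega_\chi(x_0)\cdot \mathrm{vol}(X(\mathfrak o))$, and after incorporating the normalization $P_0(\chi) = c$ (from Theorem \ref{SaSphTheorem}) this matches the computation for $R_Y$. Since the agreement holds for $\chi$ in a Zariski dense subset of $\check A_X$, the density statement of Theorem \ref{generaleigenfunctions}(3), transferred to $Y$ via $\Asymp$, forces $\Asymp(\Phi^0) = R_Y$ as formal series supported in $\mathcal C_X$.

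The main obstacle will be making the formal Fourier/Mellin pairings on $\Lambda_X$ rigorous: the sums defining the pairings live in a formal completion (with support restricted to $\mathcal C_X$), and one must justify the identification of such formal series with rational functions so that the cancellation $(N/D)(D/N) = 1$ can be carried out unambiguously and so that volume and normalization factors on the two sides match. Once this framework is settled, the substantive step is the identification of $\Asymp(\Omega_\chi)$ via Theorem \ref{SaSphTheorem} in the first paragraph; the rest is the bookkeeping of the $W_X$-symmetrization and the aforementioned cancellation.
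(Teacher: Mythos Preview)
Your plan has the right ingredients but a genuine gap that is more than bookkeeping. The pairing $\langle R_Y,\Asymp(\Omega_\chi)\rangle_Y$ you want to compute is a sum over all of $\Lambda_X$ of the product of two non-compactly-supported functions; the ``cancellation $(N/D)^w(D/N)^w=1$'' is not a legitimate formal-series identity (the resulting series $\sum_{\check\lambda}e^{w\check\lambda}(\chi)$ diverges), so one would need an honest region of absolute convergence and analytic continuation, which your outline does not supply. The adjunction step is also off: $\Asymp$ and $\Asymp^*$ are transposes between $C^\infty(X)^K$ and $\mathcal M(Y)^K$, so $\langle \Asymp(\Phi^0),\Asymp(\Omega_\chi)\rangle_Y$ does not reduce to $\langle\Phi^0,\Omega_\chi\rangle_X$; there is no reason for $\Asymp$ to be an isometry. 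Finally, the density statement of Theorem~\ref{generaleigenfunctions}(3) concerns compactly supported measures on $X$, not the class of formal series on $Y$ in which both $\Asymp(\Phi^0)$ and $R_Y$ live, so it does not separate them directly.

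The paper's proof is, in a sense, the Fourier-dual of your idea and thereby avoids all these convergence problems. It works coefficient-by-coefficient: the coefficient of $e^{\check\lambda}$ in $\Asymp(\Phi^0)$ is $\langle 1_{y_{\check\lambda}K},\Asymp(\Phi^0)\rangle_Y=\langle \Asymp^*(1_{y_{\check\lambda}K}),\Phi^0\rangle_X$, which is well-defined because $1_{y_{\check\lambda}K}$ and $\Phi^0$ are compactly supported. An extension of Proposition~\ref{exp} identifies this with the coefficient $c^0_{\check\lambda}$ in the expansion of $P_{\check\lambda}$ in the basis $\{P_{\check\mu}:\check\mu\in\Lambda_X^+\}$. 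That coefficient is then computed via the \emph{orthogonality} of the $P_{\check\mu}$'s with respect to the inner product $\int_{\check A_X^1/W_X}(\cdot)\overline{(\cdot)}\,\frac{\prod(1-e^{\check\gamma})}{\prod(1-\sigma_{\check\theta}q^{-r_{\check\theta}}e^{\check\theta})}\,d\chi$ from \cite[\S 9]{SaSph}: one evaluates $[P_{\check\lambda},P_0]/[P_0,P_0]$ as an integral over the compact torus $\check A_X^1$, where the $W_X$-symmetrization and the telescoping of numerator against weight produce exactly the $e^{\check\lambda}$-coefficient of $D/N$. So the ``cancellation'' you are after really happens on the spectral side (integrating over $\check A_X^1$), not as a sum over $\Lambda_X$.
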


 \begin{remark} 
 It follows from the definition of the set $\Theta^+$ in \cite[\S 7.1]{SaSph} that it belongs to the cone $\mathcal C_X$.
\end{remark}

 \begin{proof}
  We begin with an extension of Proposition \ref{exp}:
  
  \begin{proposition}
   For any $\check\lambda\in \Lambda_X$, let:
   $$q^{\left<\rho_{P(X)},\check\lambda\right>} P_{\check\lambda} = \sum_{\check\mu \in \Lambda_X^+} c^{\check\mu}_{\check\lambda} q^{\left<\rho_{P(X)},\check\mu\right>} P_{\check\mu}$$
   be the decomposition into the basis of Proposition \ref{basis}. Then:
   $$ \Asymp^* (1_{y_{\check\lambda}K}) = \sum_{\check\mu \in \Lambda_X^+} c^{\check\mu}_{\check\lambda} 1_{x_{\check\mu}K}.$$
  \end{proposition}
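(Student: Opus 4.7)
The plan is to invoke the density statement of Theorem \ref{generaleigenfunctions}(3): since both sides of the claimed equality live in $\mathcal M(X)^K$, it suffices to show that they pair to the same value against $\Omega_\chi$ for $\chi$ in a Zariski-dense subset of $\check A_X$. The left-hand pairing is, by the adjointness defining $\Asymp^*$, equal to $\langle 1_{y_{\check\lambda}K}, \Asymp(\Omega_\chi)\rangle$, so the task reduces to computing the value of $\Asymp(\Omega_\chi)$ at $y_{\check\lambda}K$.

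For this I would first extend the explicit formula of Theorem \ref{SaSphTheorem} from $X$ to all of $Y$. The function $\Asymp(\Omega_\chi)$ is an $\mathcal H(G,K)$-eigenfunction on $Y$ with the same eigencharacter as $\Omega_\chi$; as in the proof of Proposition \ref{exp}, it has the shape \eqref{Omega} at every $\check\nu \in \Lambda_X$. Since it already coincides with $\Omega_\chi$ at large antidominant $\check\nu$, where the latter is given by Theorem \ref{SaSphTheorem} in terms of the polynomials $P_{\check\nu}$ defined by \eqref{sphericalformula}, the same $W_X$-symmetric formula must persist for every $\check\nu$:
$$\Asymp(\Omega_\chi)(y_{\check\nu}) \;=\; c^{-1}\Omega_\chi(x_0)\, q^{\langle\rho_{P(X)},\check\nu\rangle}\, P_{\check\nu}(\chi).$$

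Specializing to $\check\nu=\check\lambda$ and substituting the given decomposition $q^{\langle\rho_{P(X)},\check\lambda\rangle} P_{\check\lambda}(\chi) = \sum_{\check\mu \in \Lambda_X^+} c^{\check\mu}_{\check\lambda}\, q^{\langle\rho_{P(X)},\check\mu\rangle} P_{\check\mu}(\chi)$, the left-hand pairing becomes $\sum_{\check\mu \in \Lambda_X^+} c^{\check\mu}_{\check\lambda} \cdot c^{-1}\Omega_\chi(x_0)\, q^{\langle\rho_{P(X)},\check\mu\rangle} P_{\check\mu}(\chi)$. A second invocation of Theorem \ref{SaSphTheorem}, now on the antidominant $\check\mu$ where $x_{\check\mu}K$ lives in the Cartan decomposition of $X$, identifies each summand as $c^{\check\mu}_{\check\lambda}\,\Omega_\chi(x_{\check\mu})$, which is exactly $c^{\check\mu}_{\check\lambda}\,\langle 1_{x_{\check\mu}K}, \Omega_\chi\rangle$. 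This is precisely the pairing of the right-hand side with $\Omega_\chi$, and density finishes the job.

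The one non-routine ingredient is the extension of the explicit formula of Theorem \ref{SaSphTheorem} from antidominant $\check\nu$ on $X$ to \emph{all} $\check\nu \in \Lambda_X$ through $\Asymp(\Omega_\chi)$ on $Y$; that step is where the structural content of Proposition \ref{exp} and the $W_X$-symmetric shape of \eqref{sphericalformula} are decisive. After it is in place, the rest is a purely formal consequence of the defining expansion together with the linear independence of $\{P_{\check\mu}\}_{\check\mu \in \Lambda_X^+}$ provided by Proposition \ref{basis}.
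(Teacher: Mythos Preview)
Your proof is correct and is precisely the ``obvious extension'' of the argument of Proposition~\ref{exp} that the paper has in mind (and omits). The two ingredients --- identifying $\Asymp(\Omega_\chi)(y_{\check\nu})$ with $c^{-1}\Omega_\chi(x_0)\,q^{\langle\rho_{P(X)},\check\nu\rangle}P_{\check\nu}(\chi)$ for \emph{all} $\check\nu\in\Lambda_X$ via the eigenfunction argument, and then invoking density --- are exactly what the paper intends.
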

  
  The argument of the proof is an obvious extension of that of Proposition \ref{exp} and will be omitted. Thus, the polynomials $P_{\check\lambda}$, even when $\check\lambda$ is not antidominant, have a meaning of their own! They represent the ``exponential'' map $\Asymp^*$.
  
  Going back to the proof of the theorem, it is now enough to show that the inner product of $\Phi^0$ with $\Asymp^* (1_{y_{\check\lambda}K})$, that is: the coefficient $c_{\check\lambda}^0$, is equal to $q^{\left<\rho_{P(X)},\check\lambda\right>}$ times the coefficient of $e^{\check\lambda}$ in the power series expansion of the right hand side of \eqref{asympBF}. That is, we need to show that the coefficient of $e^{\check\lambda}$ is equal to the constant $c^0_{\check\lambda}$ in the notation of the last proposition.
  
  It is shown in \cite[\S 9]{SaSph} that the polynomials $P_{\check\lambda}$, with $\check\lambda$ antidominant, are orthogonal with respect to the inner product:
  $$ [P,Q] =  \int_{\check A_X^1/W_X} P(\chi)\cdot \overline{Q(\chi)} \cdot \frac{ \prod_{\check\gamma\in\check\Phi_X}(1-e^{\check\gamma})}{\prod_{\check\theta\in \Theta} (1-\sigma_{\check\theta} q^{-r_{\check\theta}} e^{\check\theta})}(\chi) d\chi,$$
  where $\check A_X^1$ denotes the maximal compact subgroup of $\check A_X$.
  
  In particular, since $P_0$ is equal to the (positive) constant $c$ of Theorem \ref{SaSphTheorem}, for arbitrary $\check\lambda\in \Lambda_H$ we have:
  $$ [P_{\check\lambda}, P_0] = c \cdot \int_{\check A_X^1/W_X} \sum_{w\in W_X} \left(\frac{\prod_{\check\theta\in \Theta^+} (1-\sigma_{\check\theta} q^{-r_{\check\theta}} e^{\check\theta})}{ \prod_{\check\gamma\in\check\Phi_X^+}(1-e^{\check\gamma})} e^{\check\lambda}\right)^w \cdot \frac{ \prod_{\check\gamma\in\check\Phi_X}(1-e^{\check\gamma})}{\prod_{\check\theta\in \Theta} (1-\sigma_{\check\theta} q^{-r_{\check\theta}} e^{\check\theta})}(\chi) d\chi$$
  $$ = c\cdot \int_{\check A_X^1} \frac{ \prod_{\check\gamma\in\check\Phi_X^+}(1-e^{-\check\gamma})}{\prod_{\check\theta\in \Theta^+} (1-\sigma_{\check\theta} q^{-r_{\check\theta}} e^{-\check\theta})} e^{\check\lambda} (\chi) d\chi. $$
  
  By a complex analysis/contour shift argument one can see that for the probability measure on $\check A_X^1$ this integral is equal to the constant term of the power series that one gets by expanding the inverse of the denominator in the obvious way: $(1-\sigma_{\check\theta} q^{-r_{\check\theta}} e^{-\check\theta})^{-1} =\sum_{i\ge 0}  (\sigma_{\check\theta} q^{-r_{\check\theta}} e^{-\check\theta})^i.$
  
  Thus, the result of the calculation is $c$ times the coefficient of $e^{-\check\lambda}$ in the power series expansion of $\frac{ \prod_{\check\gamma\in\check\Phi_X^+}(1-e^{-\check\gamma})}{\prod_{\check\theta\in \Theta^+} (1-\sigma_{\check\theta} q^{-r_{\check\theta}} e^{-\check\theta})}$, or equivalently $c$ times the coefficient of $e^{\check\lambda}$ on the right hand side of \eqref{asympBF}.
  
  On the other hand, this argument shows that $ [P_0,P_0] = c$, and hence:
  $$c_{\check\lambda}^0 = \frac{[P_{\check\lambda},P_0]}{[P_0,P_0]} = \mbox{ the coefficient of $e^{\check\lambda}$ on the right hand side of \eqref{asympBF}.}$$

 \end{proof}

\begin{example}\label{examplegroup4}
 In the group case $X=H$ we have:
 $$ \Asymp(\Phi^0) = \prod_{\check\gamma\in \check\Phi_H^+} \frac{1-e^{\check\gamma}}{1-q^{-1}e^{\check\gamma}}$$
(cf.\ Example \ref{examplegroup3}). 
\end{example}

\section{Inverse Satake transforms} \label{sec:inverseSatake}

Now recall that ``Hecke'' ring $\mathcal H_X$ acting on $\mathcal S(X)^K$. Recall that we distinguish notationally between an element $h$ of this ring considered as an operator on $\mathcal S(X)^K$ (or on $C^\infty(X)^K$), and its ``Satake transform'' $\hat h\in \CC[\delta_{(X)}^\frac{1}{2}\check A_X]^{W_X}$. (Under our present assumptions: $\check A_X = \check A_{X,GN}$.)

We also have the torus $A_X$ acting ``on the left'' on $Y$. The reader should necessarily read Remark \ref{remarktorusaction}, to avoid potential confusion about the $A_X$-action on $Y$ as we discuss the Satake isomorphism.

Accordingly, the torus $A_X$ acts on $C^\infty(Y)^K$; we normalize this action as:
$$ a \cdot f (y):= \delta_{P(X)}^{\frac{1}{2}}(a) f(a y),$$
so that it is unitary on the subspace of $L^2$-functions, and
define the action of its Hecke algebra $\mathcal H(A_X, A_X(\mathfrak o))\simeq \CC[\Lambda_X] \simeq  \CC[\delta_{(X)}^\frac{1}{2}\check A_X]$  on $A_X(\mathfrak o)$-invariant functions accordingly:
$$ h\star f(y) := \int_{A_X/A_X(\mathfrak o)\simeq \Lambda_X}  a\cdot f(y) h(y).$$

Notice the isomorphism $\CC[\Lambda_X] \simeq  \CC[\delta_{(X)}^\frac{1}{2}\check A_X]$, which is simply coming from the canonical isomorphism $\CC[\Lambda_X]=\CC[\check A_X]$ composed with the obvious identification (translation by $\delta_{(X)}^\frac{1}{2}$) between $\check A_X$ and $\delta_{(X)}^\frac{1}{2}\check A_X$. We will insist on introducing this shift, as we did in \eqref{restriction}, for compatibility reasons with the Satake isomorphism that we are about to discuss. Despite the fact that these isomorphisms are completely canonical, for an element $h\in \mathcal H(A_X, A_X(\mathfrak o))\simeq \CC[\Lambda_X]$ we will write $\hat h$ for its image in $\CC[\delta_{(X)}^\frac{1}{2}\check A_X]$.

With this definition of the action, the characteristic measure of $\check\lambda\in \Lambda_X$ takes the function that we denoted above by $e^{\check\mu}$ to  $e^{\check\mu-\check\lambda}$; this explains our choice of basis. We can formulate this in terms of $h$ and $\hat h$:

\begin{lemma}\label{action}
For $\Phi\in \mathcal S(Y)^K$, let $\hat \Phi$ be its expression in the basis $(e^{\check\lambda})_{\check\lambda\in \Lambda_X}$, thought of as an element of $\CC[\check A_X]$.

Let $h\in \mathcal H(A_X, A_X(\mathfrak o))$, and denote as before by $h^\vee$ the dual element: $h^\vee(a)=h(a^{-1})$. Then:

$$ \widehat{h^\vee\star \Phi} = \hat h \cdot \hat \Phi.$$
\end{lemma}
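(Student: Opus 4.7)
The plan is to obtain the identity by bilinearity, reducing to a one-line check on monomials. Both sides depend bilinearly on $(h,\Phi)$ and continuously with respect to the cone-supported topology on formal series in the $e^{\check\lambda}$, so it suffices to verify the claim when $h=h_{\check\lambda}$, the characteristic measure of the coset $\check\lambda(\varpi)A_X(\mathfrak o)$, and $\Phi=e^{\check\mu}$ for a single $\check\mu\in\Lambda_X$.

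Under the canonical identifications $\mathcal H(A_X,A_X(\mathfrak o))\simeq\CC[\Lambda_X]\simeq\CC[\delta_{(X)}^{\frac{1}{2}}\check A_X]$, the monomial $h_{\check\lambda}$ has Satake transform $\hat h_{\check\lambda}=e^{\check\lambda}$, and its dual is $h_{\check\lambda}^\vee=h_{-\check\lambda}$. On the convolution side, the defining integral collapses to a single point: by the recalled assertion that the characteristic measure of $\check\nu$ sends $e^{\check\mu}$ to $e^{\check\mu-\check\nu}$ (applied to $\check\nu=-\check\lambda$), we get $h_{\check\lambda}^\vee\star e^{\check\mu}=e^{\check\mu+\check\lambda}$, hence $\widehat{h_{\check\lambda}^\vee\star e^{\check\mu}}=e^{\check\mu+\check\lambda}$. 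On the other hand, $\hat h_{\check\lambda}\cdot\widehat{e^{\check\mu}}=e^{\check\lambda}\cdot e^{\check\mu}=e^{\check\mu+\check\lambda}$. The two agree, proving the lemma.

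The only step that merits a sanity check -- though it is already asserted in the paragraph preceding the lemma -- is the compatibility of the two normalizations that enter: the basis choice $e^{\check\mu}=q^{\langle\rho_{P(X)},\check\mu\rangle}\mathbf{1}_{y_{\check\mu}K}$ from \eqref{defelambda} and the unitary normalization $a\cdot f(y)=\delta_{P(X)}^{\frac{1}{2}}(a)f(ay)$. Using Remark \ref{remarktorusaction}, the left $A_X$-action satisfies $\check\lambda(\varpi)\cdot y_{\check\mu}K=y_{\check\mu+\check\lambda}K$, so $\mathbf{1}_{y_{\check\mu}K}(a\,\cdot)=\mathbf{1}_{y_{\check\mu-\check\lambda}K}$, and the prefactor $\delta_{P(X)}^{\frac{1}{2}}(\check\lambda(\varpi))=q^{-\langle\rho_{P(X)},\check\lambda\rangle}$ combines with $q^{\langle\rho_{P(X)},\check\mu\rangle}$ to give exactly $q^{\langle\rho_{P(X)},\check\mu-\check\lambda\rangle}$, matching the normalization of $e^{\check\mu-\check\lambda}$. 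There is no substantive obstacle here: the lemma is really an unpacking of definitions, whose point is to transport the $\mathcal H(A_X,A_X(\mathfrak o))$-action from functions to the algebra $\CC[\delta_{(X)}^{\frac{1}{2}}\check A_X]$, with the involution $h\mapsto h^\vee$ appearing on the convolution side precisely because multiplication by $\hat h$ on the Satake side corresponds to the inverse shift on the function side.
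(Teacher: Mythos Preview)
Your proof is correct and follows exactly the approach the paper intends: the paper does not give a separate proof of this lemma, but rather states immediately before it that ``the characteristic measure of $\check\lambda\in\Lambda_X$ takes the function that we denoted above by $e^{\check\mu}$ to $e^{\check\mu-\check\lambda}$,'' and treats the lemma as the direct reformulation of that fact in terms of $h$ and $\hat h$. Your argument is precisely the bilinear reduction to this one-monomial check, with the normalization verification spelled out in detail.
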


The definition of this action is compatible with the action of $\mathcal H(G,K)$ under the usual Satake isomorphism:
$$\mathcal S: \mathcal H(G,K) \xrightarrow\sim \CC[\check A]^W,$$
in the following sense:

\begin{lemma}\label{compatible}
Let $h\in \mathcal H(G,K)$, $h' \in \CC[\Lambda_X] = \CC[\delta_{(X)}^\frac{1}{2} \check A_X]$ such that the Satake transform $\hat h$ of $h$, when  composed with the restriction map:
$$ \mathcal H(G,K)\simeq \CC[\check G]^{\check G} \xrightarrow{(*)} \CC[\delta_{(X)}^\frac{1}{2}\check A_X],$$
(where $(*)$ is as in \eqref{restriction}) is equal to $\widehat{h'} \in \CC[\delta_{(X)}^\frac{1}{2} \check A_X]$.

Then, for all $\Phi\in \mathcal S(Y)^K$, we have:
$$ h\star \Phi = h'\star \Phi,$$
where, obviously, the convolution on the left is with respect to the $G$-action and the convolution on the right is with respect to the $A_X$-action.
\end{lemma}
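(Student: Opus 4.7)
The plan is to test both actions against the family of Hecke eigenfunctions on $Y$ furnished by Theorem \ref{generaleigenfunctions}, and then invoke the density statement (3) of that theorem to upgrade an equality of eigenvalues into the desired pointwise identity on $\mathcal S(Y)^K$.

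First I would apply Theorem \ref{generaleigenfunctions} with $X$ replaced by $Y$, which is legitimate because $Y$ (being horospherical, with $\tilde\Lambda_Y = \Lambda_Y = \Lambda_X$) satisfies all the standing assumptions. This furnishes a rational family $\chi \mapsto \Omega_\chi^Y \in C^\infty(Y)^K$ of $\mathcal H(G,K)$-eigenfunctions on which $h$ acts by the scalar $\hat h(\chi\delta_{(X)}^{1/2})$. Using formula \eqref{Omega} on horospherical $Y$, I would then decompose
$$\Omega_\chi^Y = \sum_{w\in W_X} \omega_{w\chi},$$
where the summand $\omega_{w\chi}$ has $e^{\check\lambda}$-coefficient $(w\chi)(e^{\check\lambda})$ in the basis \eqref{defelambda}, and verify (via a direct computation with the normalized $A_X$-action $a\cdot f(y)=\delta_{P(X)}^{1/2}(a)f(ay)$ and the Iwasawa-type parametrization $Y/K\leftrightarrow \Lambda_X$) that each $\omega_{w\chi}$ is an $A_X$-eigenfunction whose eigencharacter corresponds, under the shifted identification $\mathcal H(A_X,A_X(\mathfrak o)) \simeq \CC[\delta_{(X)}^{1/2}\check A_X]$, to the point $w\chi\cdot\delta_{(X)}^{1/2}$. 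The normalization $\delta_{P(X)}^{1/2}$ in the $A_X$-action is precisely what absorbs the $\delta_{(X)}^{1/2}$-shift hidden in this identification.

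From this, for any $h'\in\CC[\Lambda_X]$ I would conclude
$$h'\star\Omega_\chi^Y = \sum_{w\in W_X} \widehat{h'}(w\chi\cdot\delta_{(X)}^{1/2})\cdot\omega_{w\chi}.$$
The hypothesis forces $\widehat{h'}$ to be $W_X$-invariant (being the restriction of the $W$-invariant $\hat h$ to $\delta_{(X)}^{1/2}\check A_X$), so all these coefficients collapse to the single scalar $\widehat{h'}(\chi\delta_{(X)}^{1/2}) = \hat h(\chi\delta_{(X)}^{1/2})$, giving $h'\star\Omega_\chi^Y = h\star\Omega_\chi^Y$. To convert this into the conclusion of the lemma I would dualize: for any $\Phi\in \mathcal S(Y)^K$ and any $\chi$ in the regular locus,
$$\langle \Omega_\chi^Y, h\star\Phi\rangle = \langle h^\vee\star\Omega_\chi^Y, \Phi\rangle = \langle (h')^\vee\star\Omega_\chi^Y, \Phi\rangle = \langle \Omega_\chi^Y, h'\star\Phi\rangle,$$
where the middle equality applies the previous identity to $h^\vee$ and $(h')^\vee$ (whose Satake transforms again agree on $\delta_{(X)}^{1/2}\check A_X$, as $\delta_{(X)}$ and $\delta_{(X)}^{-1}$ are $W$-conjugate, per the remark following \eqref{shift}). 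The density statement of Theorem \ref{generaleigenfunctions}(3), applied to $Y$, then forces $h\star\Phi = h'\star\Phi$.

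The nontrivial obstacle is the verification in the second paragraph: pinning down that the $A_X$-eigencharacter of each piece $\omega_{w\chi}$ is $w\chi\cdot\delta_{(X)}^{1/2}$ rather than $w\chi$. This is a direct but convention-laden bookkeeping exercise involving the normalization $\delta_{P(X)}^{1/2}$, the Iwasawa parametrization of $Y/K$, and the $\delta_{(X)}^{1/2}$-shift built into the isomorphism $\CC[\Lambda_X] = \CC[\delta_{(X)}^{1/2}\check A_X]$. Once this is pinned down, the $W_X$-collapse and the density argument are formal.
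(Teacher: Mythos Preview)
Your approach is spectrally natural but has two issues, one cosmetic and one substantive.

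First, the little Weyl group of a horospherical variety is \emph{trivial}. When you apply Theorem~\ref{generaleigenfunctions} with $X$ replaced by $Y$, the sum in \eqref{Omega} runs over $W_Y=\{1\}$, not over $W_X$; so each $\Omega_\chi^Y$ is already (up to a scalar) a single $\omega_\chi$, and there is nothing to decompose and nothing to ``collapse'' via $W_X$-invariance of $\widehat{h'}$. This simplifies rather than breaks your argument.

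Second, and more seriously: Theorem~\ref{generaleigenfunctions} is stated under the standing hypotheses of \S\ref{sec:assumptions}, which include the wavefront condition. Since $W_Y$ is trivial one has $\mathfrak a_Y^+=\mathfrak a_Y$, so wavefront for $Y$ would demand that the antidominant cone $\mathfrak a^+$ surject onto all of $\mathfrak a_Y$; this fails whenever $\mathfrak a_Y\ne 0$. Hence you cannot simply invoke the theorem for $Y$. The two facts you actually need---that $\omega_\chi$ is an $\mathcal H(G,K)$-eigenfunction with eigenvalue $\hat h(\chi\delta_{(X)}^{1/2})$, and that the $\omega_\chi$ separate $\mathcal M(Y)^K$---are of course true and elementary for horospherical $Y$. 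Density is immediate from the Iwasawa parametrization $Y/K\leftrightarrow\Lambda_Y$. But establishing the eigenvalue statement is precisely the content of the lemma: it is the classical Satake isomorphism on $N^-\backslash G$ together with the reduction of a general $Y$ to that case. This is exactly what the paper's proof does directly---it reduces to $Y=N^-\backslash G$, writes out the Satake integral $\mathcal Sh(a)=\delta_B(a)^{1/2}\int_N h(an)\,dn$, matches it against the $A$-action on $\Phi^0$, and then bootstraps to all $\Phi$ via $A$-equivariance. Your spectral packaging is fine once this input is in hand, but it does not circumvent it.
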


\begin{proof}
It is enough to show this for the special case $Y=N^-\backslash G$. For, any other $Y'$ is a quotient of this of the form $S\backslash G$, where $S$ lives between a parabolic $P(Y')^-$ containing $N^-$ and its derived group, and the action of $\CC[\check A]$ on $\mathcal S(N^-\backslash G)$ descends to $\mathcal S(S\backslash G)$ via the corresponding restriction map:
$$\CC[\check A]\to \CC[\delta_{(Y')}^\frac{1}{2} \check A_{Y'}],$$
where $\delta_{(Y')}$ is defined in complete analogy with $\delta_{(X)}$ earlier. 

When $Y=N^-\backslash G = N\backslash G$, this is the setting of the original Satake transform. Following \cite[(3.4)]{Gross-Satake}, the  Satake transform of an element $h\in \mathcal H(G,K)$ considered as a function (fixing the Haar measure $dg$ on $G$ which gives mass $1$ to $K$), is defined as the following function on the universal Cartan $A$:
$$\mathcal Sh(a) := \delta_B(a)^\frac{1}{2} \int_N h(an) dn,\,\, (a\in A)$$
where $B$ is any Borel subgroup, $A$ is identified with its reductive quotient, and the measure on $N$ is such that $dg = \delta_B(t)^{-1} dn dt dk$ according to the Iwasawa decomposition $G=NTK$, where $T$ is a Cartan subgroup of $B$ and $dt (T(\mathfrak o))=1$.

Let us say that $B$ is chosen opposite to the subgroup $N^-$ above, and let $T=B\cap B^-$, where $B^-$ is the normalizer of $N^-$. The embedding $T\hookrightarrow B\twoheadrightarrow A$ identifies $T$ with $A$. Let $w\in K$ be an element which belongs to the normalizer of $T$ and corresponds to the longest element of the Weyl group, then \emph{by the invariance of $\mathcal Sh(t)$ under $W$} we have, for $t\in T\simeq A$:
$$\mathcal Sh(t) = \mathcal Sh(wtw^{-1}) = \delta_B(t)^{-\frac{1}{2}} \int_N h(wtw^{-1}n) dn = \delta_B(t)^{-\frac{1}{2}} \int_{N^-} h(tn) dn =$$ $$ = \delta_B(t)^{-\frac{1}{2}} h\star \Phi^0 (N^- t^{-1}).$$

Hence, for $\check\mu\in \Lambda_Y$, the evaluation of $\mathcal Sh$ at the associated representative $y_{\check\mu}\in A$ is:

\begin{tabular}{rl}
$\mathcal Sh(y_{\check\mu}) =$ & the coefficient of $e^{-\check\mu}$ when we write $h\star \Phi^0$ in the basis \\ & consisting of the elements $e^{\check\lambda}$ that we introduced above.
\end{tabular}

It follows from Lemma \ref{action} that if $h'\in \mathcal H(A,A(\mathfrak o))$ is the element $\mathcal Sh(t)dt$ (so that $\hat h = \widehat{h'} \in \CC[\check A]$ as required by the present lemma), then the coefficient of $e^{-\check\mu}$ in $\widehat{h'\star \Phi^0}$ is equal to its coefficient in $\widehat{h'^\vee}$, i.e.\ equal to the coefficient of $e^{\check\mu}$ in $\widehat{h'}$, i.e.\ equal to  $\mathcal Sh(y_{\check\mu})$. Hence:
$$ \widehat{h'\star \Phi^0} = \widehat{h\star \Phi^0}.$$

The same has to hold if we replace $\Phi^0$ by any element of $\mathcal S(Y)^K$, since it generates all of them under the action of $\mathcal H(A,A(\mathfrak o))$, and the actions of $A$ and $G$ commute.
\end{proof}

Now we come to combining the theory of asymptotics with the explicit formulas of the previous section. Notice that under the assumptions of the present section we have:
\begin{proposition}
The whole ring $\mathcal H_X = \CC[\delta_{(X)}\check A_X]^{W_X}$ acts on $\mathcal S(X)^K$, and the Satake transform of Definition 3.2 is an isomorphism:
$$ \Sat:  \mathcal S(X)^K \xrightarrow\sim \mathcal H_X.$$
\end{proposition}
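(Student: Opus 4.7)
The plan is to use Proposition \ref{basis} to reduce to the basis $\{P_{\check\lambda}\}_{\check\lambda\in\Lambda_X^+}$ of $\mathcal H_X$, and to compute $\Sat^{-1}(P_{\check\lambda})$ explicitly via the asymptotics framework, obtaining a diagonal relationship with the standard basis $\{1_{x_{\check\lambda}K}\}$ of $\mathcal S(X)^K$.

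First I would observe that the asymptotics map $\Asymp$ is automatically $\mathcal H_X$-equivariant. Lemma \ref{compatible} identifies the $\mathcal H(G,K)$-action on $\mathcal S(Y)^K$ with the $A_X$-convolution action coming from $\CC[\delta_{(X)}^\frac{1}{2}\check A_X]$, so the $\mathcal H_X$-action on both sides of $\Asymp$ is uniquely determined, and $\mathcal H(G,K)$-equivariance (Theorem \ref{thmasymp}) forces $\mathcal H_X$-equivariance. Combined with Corollary \ref{corollaryvalidity}, this gives, for every $\hat h\in\mathcal H_X$,
$$(h^\vee\star\Phi^0)(x_{\check\lambda})=(h^\vee\star\Asymp(\Phi^0))(y_{\check\lambda}),\qquad\check\lambda\in\tilde\Lambda_X^+.$$

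Next, by Lemma \ref{action} applied to Theorem \ref{theoremBF}, the right-hand side is read off (via the coefficient of $e^{\check\lambda}$, up to the normalization $q^{\langle\rho_{P(X)},\check\lambda\rangle}$) from the power-series expansion, in the cone $\mathcal C_X$, of
$$\hat h\cdot\frac{\prod_{\check\gamma\in\check\Phi_X^+}(1-e^{\check\gamma})}{\prod_{\check\theta\in\Theta^+}(1-\sigma_{\check\theta}q^{-r_{\check\theta}}e^{\check\theta})}.$$
Substituting $\hat h=P_{\check\lambda}$ via the explicit formula \eqref{sphericalformula}, the $w=1$ summand of $P_{\check\lambda}$ cancels the outside rational factor and produces precisely $e^{\check\lambda}$, while each $w\ne 1$ summand gives a rational function multiplied by the ``shifted'' monomial $e^{w\check\lambda}$.

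The main technical obstacle is then to show that for every $w\ne 1$ the formal expansion in $\mathcal C_X$ of
$$\left(\frac{\prod_{\Theta^+}(1-\sigma_{\check\theta}q^{-r_{\check\theta}}e^{\check\theta})}{\prod_{\check\Phi_X^+}(1-e^{\check\gamma})}e^{\check\lambda}\right)^w\cdot\frac{\prod_{\check\Phi_X^+}(1-e^{\check\gamma})}{\prod_{\Theta^+}(1-\sigma_{\check\theta}q^{-r_{\check\theta}}e^{\check\theta})}$$
has no monomials $e^{\check\mu}$ with $\check\mu\in\Lambda_X^+$. This should follow from essentially the same contour-shift/residue manipulation that is used in the final step of the proof of Theorem \ref{theoremBF}: the orthogonality of the $P_{\check\lambda}$ with respect to the spherical measure forces the only antidominant monomials in \eqref{sphericalformula}$\,\cdot\,\Asymp(\Phi^0)$ to come from the identity Weyl element. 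Granting this vanishing one obtains $\Sat^{-1}(P_{\check\lambda})=q^{\langle\rho_{P(X)},\check\lambda\rangle}\cdot 1_{x_{\check\lambda}K}$, which simultaneously shows that convolution with any $\hat h\in\mathcal H_X$ produces a compactly supported function (so the whole ring $\mathcal H_X$ genuinely acts on $\mathcal S(X)^K$) and that $\Sat^{-1}$ carries the basis $\{P_{\check\lambda}\}$ of $\mathcal H_X$ onto the basis $\{1_{x_{\check\lambda}K}\}$ of $\mathcal S(X)^K$. Combined with the injectivity noted in Definition \ref{defSatake}, this gives that $\Sat$ is the asserted isomorphism.
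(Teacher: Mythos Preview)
The paper proves this in one line by citing \cite[Theorem 8.0.2]{SaSph}; it is treated as an \emph{input} on which the later machinery (Propositions \ref{equivariance} and \ref{propinversion}, Theorem \ref{maintheorem}) is built. Your argument runs that dependency in reverse and is circular. Your first step, ``$\Asymp$ is automatically $\mathcal H_X$-equivariant,'' already presupposes that $\mathcal H_X$ acts on $C^\infty(X)^K$, which is precisely the first assertion you are supposed to establish; the sentence ``the $\mathcal H_X$-action on both sides of $\Asymp$ is uniquely determined'' yields at best uniqueness of such an action, not existence, and is in any case the content of Proposition \ref{equivariance}, which in the paper comes \emph{after} and relies on the present proposition. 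For the same reason, invoking Corollary \ref{corollaryvalidity} on $h^\vee\star\Phi^0$ assumes that this element already makes sense in $C^\infty(X)^K$. Note also that Proposition \ref{basis}, which you use freely, is itself quoted from the proof of \cite[Theorem 8.0.2]{SaSph}.

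Even granting all that, the step you flag as the ``main technical obstacle'' is the heart of the matter and you do not resolve it; moreover, the precise vanishing you state is false whenever $\check\lambda$ lies on a wall of the chamber (already for $\check\lambda=0$ the $w\ne 1$ summands contribute to the coefficient of $e^0$, and the total is $P_0=c$ from Theorem \ref{SaSphTheorem}, not $1$). The correct statement is that the only antidominant monomial in $P_{\check\lambda}\cdot\Asymp(\Phi^0)$ is $e^{\check\lambda}$, with a nonzero coefficient depending on the $W_X$-stabilizer of $\check\lambda$, so your final formula $\Sat^{-1}(P_{\check\lambda})=q^{\langle\rho_{P(X)},\check\lambda\rangle}\,1_{x_{\check\lambda}K}$ needs adjusting. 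Proving that cleanly, and then using it to \emph{define} the $\mathcal H_X$-action on $\mathcal S(X)^K$ and check it is a genuine module structure extending the $\mathcal H(G,K)$-action, is feasible but amounts to reproving \cite[Theorem 8.0.2]{SaSph} rather than bypassing it.
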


\begin{proof}
This is \cite[Theorem 8.0.2]{SaSph}.
\end{proof}

On the other hand, we may identify $\mathcal H_X$ as the subring of $W_X$-invariants in the ring $\CC[\delta_{(X)}\check A_X]$ acting on $\mathcal S(Y)^K$. Then:
\begin{proposition}\label{equivariance}
The asymptotics map $\Asymp: C^\infty(X)^K\to C^\infty(Y)^K$ is $\mathcal H_X$-equivariant. 
\end{proposition}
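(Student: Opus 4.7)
The plan is to pass to the transpose $\Asymp^*: \mathcal M(Y)^K \to \mathcal M(X)^K$ from \eqref{eTheta} and to show that it is $\mathcal H_X$-equivariant; the claim about $\Asymp$ then follows by duality, since the $\mathcal H_X$-action on $C^\infty$ spaces may be defined as the transpose of its action on compactly supported measures (up to the involution $h \mapsto h^\vee$). Working on the measure side is advantageous because both modules are torsion-free over $\mathcal H_X$: the space $\mathcal M(Y)^K$ is free of rank $|W_X|$ via $\mathcal M(Y)^K \simeq \CC[\Lambda_X]$ and Lemma \ref{action}, and $\mathcal M(X)^K$ is free of rank $1$ by the Satake isomorphism established in the preceding Proposition.

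The first step is to verify $\mathcal H_X'$-equivariance of $\Asymp^*$, where $\mathcal H_X'$ denotes the image of the restriction map (*) from Section \ref{sec:Satake}. Dualising Theorem \ref{thmasymp} gives $\mathcal H(G,K)$-equivariance of $\Asymp^*$. Lemma \ref{compatible} identifies the action of $h \in \mathcal H(G,K)$ on $\mathcal M(Y)^K$ with that of its image $h' \in \mathcal H_X'$, and by Theorem \ref{factorsthrough} the action on $\mathcal M(X)^K$ also factors through $\mathcal H_X'$; hence $\Asymp^*(h' \star \mu) = h' \star \Asymp^*(\mu)$ for every $h' \in \mathcal H_X'$ and $\mu \in \mathcal M(Y)^K$.

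The second step extends to all of $\mathcal H_X$ by clearing denominators. Because $\mathcal S(X)^K$ is a faithful $\mathcal H_X$-module (by the Satake isomorphism) and torsion-free over $\mathcal H_X'$ (Theorem \ref{factorsthrough}), the subring $\mathcal H_X'$ must generate the fraction field of $\mathcal H_X$. For any $h \in \mathcal H_X$, pick a nonzero $h_1 \in \mathcal H_X'$ with $h_1 h \in \mathcal H_X'$; then for any $\mu \in \mathcal M(Y)^K$
\begin{equation*}
h_1 \star \Asymp^*(h \star \mu) = \Asymp^*((h_1 h) \star \mu) = (h_1 h) \star \Asymp^*(\mu) = h_1 \star \bigl(h \star \Asymp^*(\mu)\bigr),
\end{equation*}
where the outer two equalities use Step 1. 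Since $h_1$ is not a zero-divisor on the torsion-free module $\mathcal M(X)^K$, cancelling gives $\Asymp^*(h \star \mu) = h \star \Asymp^*(\mu)$.

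The main obstacle I anticipate is in Step 2: verifying rigorously that $\mathcal H_X'$ rationally generates $\mathcal H_X$, and that the $\mathcal H_X$-action on $C^\infty(X)^K$ appearing in the statement agrees with the dual of its action on $\mathcal M(X)^K$. Both points are implicit in the ``naturalness'' of the Hecke extension discussed after Theorem \ref{factorsthrough}, and under the standing assumptions of Section \ref{sec:asymp-basic} they can be extracted from the explicit description $\mathcal H_X = \CC[\delta_{(X)}^{\frac{1}{2}}\check A_X]^{W_X}$ together with the structure of $\mathcal H_X$ as a finite (hence integral) extension of $\mathcal H_X'$ inside its fraction field.
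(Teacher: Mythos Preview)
Your Step~1 and the denominator-clearing mechanism of Step~2 coincide with the paper's argument in the case where $\mathcal H_X'$ rationally generates $\mathcal H_X$, i.e.\ where $\check A_X/W_X \to \check A/W$ is generically injective. The paper runs exactly this torsion-free/clearing-denominators argument there.

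The gap is in your justification that $\mathcal H_X'$ rationally generates $\mathcal H_X$. The inference ``$\mathcal S(X)^K$ is faithful over $\mathcal H_X$ and torsion-free over $\mathcal H_X'$, hence $\mathcal H_X'$ generates the fraction field of $\mathcal H_X$'' is invalid: take $\mathcal H_X' = \CC[t^2] \subset \mathcal H_X = \CC[t]$ acting on $\CC[t]$ itself. Nor does finiteness of the extension $\mathcal H_X' \subset \mathcal H_X$ help, for the same example. The paper treats generic injectivity as an \emph{additional hypothesis} known in many cases (symmetric varieties, most wavefront examples) but not in general; when it fails, the paper abandons the denominator-clearing route entirely and instead invokes the characterization of the $\mathcal H_X$-action via the Eisenstein integrals $S_\chi: \mathcal S(X)\to I_{P(X)}(\chi)$, together with the commutativity of the asymptotics/$S_\chi$ diagram from \cite[Proposition 5.4.6]{SV}. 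That is the missing idea in your proposal for the general case.
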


\begin{proof}
This is easy to see if the image of the restriction map:
$$\CC[\check A]^W \to \CC[\delta_{(X)}^\frac{1}{2}\check A_X]^{W_X}\simeq \mathcal H_X$$
generates $\mathcal H_X$ rationally (i.e.\ generates its field of fractions), or equivalently: when the map $\check A_X/W_X\to \check A/W$ is generically injective.

This is the case in most examples of wavefront spherical varieties, and certainly in the case of symmetric varieties (see the -- stronger -- notion of ``generic injectivity'' in \cite[\S 14.2]{SV}, and Lemma 15 in Delorme \cite{Delorme-Plancherel} which proves it in the symmetric case). 

Indeed, since $\Asymp$ is equivariant under the action of $\mathcal H(G,K)\simeq \CC[\check A]^W$, and by Lemma \ref{compatible} the action of the latter is compatible with the action of $\CC[\delta_{(X)}^\frac{1}{2}\check A_X]$ ``on the left'', it follows that the asymptotics map is equivariant with respect to the image of the restriction map, considered as a subring of $\mathcal H_X$. Since the modules are torsion-free, if the image generates $\mathcal H_X$ rationally then it has to be equivariant with respect to the whole ring $\mathcal H_X$. 

I sketch the proof in the general case: The action of $\mathcal H_X$ on $\mathcal S(X)^K$ was characterized in \cite{SaSph} by the requirement of being equivariant with respect to certain operators which in the literature (although not in \emph{loc.cit}.) are sometimes called ``Eisenstein integrals''. These are a certain rational family of operators:
$$S_\chi: \mathcal S(X) \to I_{P(X)}(\chi),$$
where $I_{P(X)}(\chi)$ denotes the normalized principal series induced from the character $\chi$ of $P(X)$, 
as $\chi$ varies in $\check A_X$. The operators (or rather, the functional obtained by composing with ``evaluation at $1$'') are defined in some convergent region by an integral on the open $P(X)$-orbit, and extended rationally to all $\check A_X$. Again, our parametrization of characters is shifted by $\delta_{(X)}^\frac{1}{2}$ compared to that of \cite{SaSph}, cf.\ the remark following Theorem \ref{generaleigenfunctions}. The action of the ring $\mathcal H_X$ on $I_{P(X)}(\chi)$ is defined to be by the scalar obtained by evaluation at $\delta_{(X)}^\frac{1}{2}\chi$.

The same operators can be defined on $\mathcal S(Y)$, and \cite[Proposition 5.4.6]{SV} states that the following diagram commutes:

$$\xymatrix{
 \mathcal S(X) \ar[dd]_{\Asymp}\ar[dr]  \\
& I_{P(X)}(\chi) \\
\mathcal S_X(Y)  \ar[ur],\\
}
$$
where by $\mathcal S_X(Y)$ we denote the image of $\mathcal S(X)$ in $C^\infty(Y)$.

This suffices to show equivariance of the asymptotics map under $\mathcal H_X$.
\end{proof}

In particular:

\begin{proposition}\label{propinversion}
Let $h\in \mathcal H_X$ and $\Phi\in C^\infty(X)^K$. Then: 
$$\widehat{\Asymp(h^\vee\star \Phi)} = \hat h \cdot \Asymp(\Phi),$$
both sides thought of as formal series in $\Lambda_X$ (supported on a translate of the cone $\mathcal C_X$).

In particular, 
$$ \Asymp(\Sat^{-1}(\hat h)) = \hat h \cdot \Asymp(\Phi^0).$$

Moreover, considered as functions on $\Lambda_X^+$, $\Sat^{-1}(\hat h)$ and $\hat h \cdot \Asymp(\Phi^0)$ coincide.
\end{proposition}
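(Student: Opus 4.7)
The plan is to chain together the three main tools: the $\mathcal H_X$-equivariance of $\Asymp$ from Proposition \ref{equivariance}, the formula $\widehat{h^\vee\star\Phi_Y}=\hat h\cdot\hat\Phi_Y$ from Lemma \ref{action}, and finally the pointwise identification of $\Phi$ with $\Asymp(\Phi)$ on the antidominant chamber provided by Corollary \ref{corollaryvalidity}.

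\medskip

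First I would establish the identity $\widehat{\Asymp(h^\vee\star\Phi)}=\hat h\cdot\widehat{\Asymp(\Phi)}$ as formal series supported in a translate of $\mathcal C_X$. By Proposition \ref{equivariance}, $\Asymp$ intertwines the $\mathcal H_X$-actions on $C^\infty(X)^K$ (coming from the $G$-action) and on $C^\infty(Y)^K$ (coming, via the inclusion $\mathcal H_X\hookrightarrow\CC[\delta_{(X)}^\frac{1}{2}\check A_X]$, from the $A_X$-action on $Y$ described in Remark \ref{remarktorusaction} and normalized as in Lemma \ref{action}). Therefore $\Asymp(h^\vee\star\Phi)=h^\vee\star\Asymp(\Phi)$ in $C^\infty(Y)^K$. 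Lemma \ref{action} computes this action of $h^\vee$ on the power-series side as multiplication by $\hat h$: the lemma is stated for $\Phi_Y\in\mathcal S(Y)^K$ but the proof is formal and extends verbatim to any $A_X(\mathfrak o)$-invariant function whose expansion in the basis $(e^{\check\lambda})$ has support in a translate of $\mathcal C_X$, which covers $\Asymp(\Phi)$. This gives the first displayed equation.

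\medskip

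Second, I would specialize $\Phi=\Phi^0$. By Definition \ref{defSatake}, $\Sat^{-1}(\hat h)=h^\vee\star\Phi^0$, so the equation just obtained reads
$$\widehat{\Asymp(\Sat^{-1}(\hat h))}=\hat h\cdot\widehat{\Asymp(\Phi^0)}.$$
Since both sides are formal series supported in a translate of $\mathcal C_X$ (the right-hand side because $\Asymp(\Phi^0)$ is, by Theorem \ref{theoremBF} together with the preceding proposition, and multiplication by the polynomial $\hat h$ preserves this property after enlarging $\mathcal C_X$ to a strictly convex cone $\mathcal C_X'$ containing also the $W_X$-translates of the support of $\hat h$), they represent the same function on $\Lambda_X$, which gives the second displayed equation.

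\medskip

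For the last assertion, I would invoke Corollary \ref{corollaryvalidity} applied to $\Phi=\Sat^{-1}(\hat h)\in\mathcal S(X)^K$: as functions on $\Lambda_X^+=\tilde\Lambda_X^+$ (we are under the running assumption $\tilde\Lambda_X=\Lambda_X$), $\Sat^{-1}(\hat h)$ and $\Asymp(\Sat^{-1}(\hat h))$ agree. Combined with the second equation this yields the desired pointwise equality of $\Sat^{-1}(\hat h)$ and $\hat h\cdot\Asymp(\Phi^0)$ on $\Lambda_X^+$.

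\medskip

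The only non-routine point is the bookkeeping around the power-series conventions: one must check that $\hat h\cdot\widehat{\Asymp(\Phi^0)}$ really does represent the function $\Asymp(\Sat^{-1}(\hat h))$ unambiguously, i.e.\ that its support lies in a single strictly convex cone, so that no ambiguity of the type $\frac{1}{1-e^{\check\alpha}}\leftrightarrow\sum_{i\ge 0}e^{i\check\alpha}$ versus $-\sum_{i\ge 1}e^{-i\check\alpha}$ can arise. This is handled by enlarging $\mathcal C_X$ to a strictly convex cone $\mathcal C_X'$ that also contains the (finite) support of $\hat h$ and its $W_X$-translates, exactly as foreshadowed in the parenthetical remark after the definition of $\mathcal C_X$. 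Everything else is an unwinding of Definition \ref{defSatake} and the previously proven compatibilities.
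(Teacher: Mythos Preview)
Your proof is correct and follows exactly the same route as the paper: the paper's own proof simply says that the proposition is a combination of Proposition \ref{equivariance}, Lemma \ref{action} (noting, as you do, that it extends to power series), and Corollary \ref{corollaryvalidity}. Your write-up just unpacks this in more detail, including the cone bookkeeping that the paper leaves implicit.
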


\begin{proof}
This is a combination of what has been proven thus far, namely: Proposition \ref{equivariance}, Lemma \ref{action} (which is easily extended to functions represented by power series) and Corollary \ref{corollaryvalidity}.
\end{proof}

We would like to extend the above to suitable series of elements in $\mathcal H_X$, when $\Phi\in \mathcal S(X)^K$ (so that $\Asymp(\Phi)$ is supported on a translate of the cone $\mathcal C_X$). More precisely, let $\mathcal K_X$ denote all series in the elements $e^{\check\lambda}$, $\check\lambda\in \Lambda_X$ which are $W_X$-invariant, and such that \emph{their support on every translate of the sublattice spanned by $\mathcal C_X$ is compact}. Via the identification $\mathcal H_X = \CC[\Lambda_X]^{W_X}$, the ring $\mathcal H_X$ is the subring of compactly supported elements of $\mathcal K_X$.

\begin{remark}
The assumption on support places a strong restriction on $X$, if $\mathcal K_X$ is to contain elements of non-compact support. Namely, $\mathcal C_X$ should not (rationally) span the whole lattice $\Lambda_X$, which implies that there is a non-trivial eigenfunction of $G$ on the coordinate ring $F[X]$. This in essence leaves out varieties which, in the language of \cite{SV}, are not ``factorizable'', which should eventually be included in the theory.

\end{remark}

Let $\Lambda'_X = $the quotient of $\Lambda_X$ by the sublattice rationally spanned by $\mathcal C_X$, and let $\det$ denote the quotient map. We will use the same symbol to denote the map $X\to \Lambda'_X$ induced by the Cartan decomposition $X/K=\Lambda_X^+$. For $\hat h\in \mathcal K_X$ we have a decomposition:
$$ \hat h = \sum_{\check\delta\in\Lambda'_X} \hat h_\delta,$$
where $\hat h_\delta \in \CC[\Lambda_X]^{W_X}\simeq \mathcal H_X$ is supported in $\det^{-1}(\delta)$, and if $h_\delta$ is the operator on $\mathcal S(X)^K$ corresponding to $\hat h_\delta$ then, for any $\Phi\in\mathcal S(X\cap \det^{-1}(\delta_1))^K$ we have $h_\delta\star \Phi \in \mathcal S(X \cap \det^{-1}(\delta_1+\delta))^K$. Hence, for every $\Phi\in \mathcal S(X)$ we have a well-defined element:
$$ h \star \Phi\in C^\infty(X)^K$$
whose support on every subset of the form $\det^{-1}(\delta_1)$ is compact.

In particular, the inverse Satake transform extends to $\mathcal K_X$:
$$ \Sat^{-1}(\hat h):= h^\vee \star \Phi^0.$$

Now let $\check\rho \in \Lambda_X^+$ be outside of the rational span of $\mathcal C_X$; in particular, the cone $\mathcal C_X'$ spanned by $\check\rho$ and $\mathcal C_X$ is strictly convex. We write $\hat h = L(\check\rho)\in \CC(\delta_{(X)}^\frac{1}{2}\check A_X)^{W_X}$ for the rational function:
$$ \delta_{(X)}^\frac{1}{2} \chi\mapsto  \det\left(I-\chi|_{V_{\check\rho}}\right)^{-1},$$
where $V_{\check\rho}$ is the irreducible module of $\check G_X$ of \emph{lowest weight} $\check\rho$. We consider it as a power series with support in the cone $\mathcal C_X'$; t satisfies the assumptions of the preceding discussion, i.e.\ it belongs to $\mathcal K_X$. 

A combination of Proposition \ref{propinversion} and Theorem \ref{theoremBF} now gives:

\begin{theorem}\label{maintheorem}
Let $\hat h = L(\check\rho)\in \CC(\delta_{(X)}^\frac{1}{2}\check A_X)^{W_X}$, then, as functions on $X/K=\Lambda_X^+$:
 \begin{equation}
\Sat^{-1}\left( L(\check\rho) \right)= h^\vee\star \Phi^0 = \left. L(\check\rho) \cdot \frac{ \prod_{\check\gamma\in\check\Phi_X^+}(1-e^{\check\gamma})}{\prod_{\check\theta\in \Theta^+} (1-\sigma_{\check\theta} q^{-r_{\check\theta}} e^{\check\theta})} \right|_{\Lambda_X^+}.
 \end{equation}
We remind that the term $e^{\check\lambda}$ on the right hand side should be interpreted as $q^{\left<\rho_{P(X)},\check\lambda\right>}$ times the characteristic function of $x_{\check\lambda} K$.
\end{theorem}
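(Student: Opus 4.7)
The plan is that this theorem is essentially a formal consequence of three previously established ingredients: the extended Proposition~\ref{propinversion}, which says $\Asymp(\Sat^{-1}(\hat h)) = \hat h \cdot \Asymp(\Phi^0)$ for $\hat h \in \mathcal K_X$; Theorem~\ref{theoremBF}, which computes $\Asymp(\Phi^0)$ explicitly; and Corollary~\ref{corollaryvalidity}, which says that any $K$-invariant function on $X$ can be recovered from its image under $\Asymp$ by restriction to $\Lambda_X^+$. The proof is just to stitch these together, with one verification needed in advance.

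First I would verify that $L(\check\rho)$ indeed belongs to $\mathcal K_X$ when expanded as a power series with support in the cone $\mathcal C_X'$. Expanding the determinant gives
$$L(\check\rho) = \det\bigl(I - \chi|_{V_{\check\rho}}\bigr)^{-1} = \sum_{n\ge 0} \tr\bigl(\chi \mid \operatorname{Sym}^n V_{\check\rho}\bigr),$$
whose support is contained in the monoid generated by the weights of $V_{\check\rho}$. Since $V_{\check\rho}$ has lowest weight $\check\rho$, these weights are of the form $\check\rho + \sum_i n_i\check\gamma_i$ with $n_i \ge 0$ and $\check\gamma_i \in \check\Phi_X^+$; because $\mathcal C_X$ contains the positive coroots (see the remark following the definition of $\mathcal C_X$), the whole support lies in $\mathcal C_X'$, which is strictly convex. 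The compactness of the support on each translate of the sublattice spanned by $\mathcal C_X$ then follows from the fact that $\check\rho$ has nonzero image in $\Lambda_X'$, so only finitely many $n$ contribute to each fiber of $\det$; hence $L(\check\rho) \in \mathcal K_X$.

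Next, I would apply the extension of Proposition~\ref{propinversion} to $\mathcal K_X$, which yields
$$\Asymp\bigl(\Sat^{-1}(L(\check\rho))\bigr) = L(\check\rho)\cdot \Asymp(\Phi^0),$$
as power series supported in a translate of $\mathcal C_X'$. Substituting the formula of Theorem~\ref{theoremBF} gives
$$\Asymp\bigl(h^\vee \star \Phi^0\bigr) = L(\check\rho)\cdot \frac{\prod_{\check\gamma\in\check\Phi_X^+}(1-e^{\check\gamma})}{\prod_{\check\theta\in \Theta^+}(1-\sigma_{\check\theta}q^{-r_{\check\theta}}e^{\check\theta})},$$
as power series on $Y/K = \Lambda_X$. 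Finally, restricting to $\Lambda_X^+$ and invoking Corollary~\ref{corollaryvalidity} (applied to $\Sat^{-1}(L(\check\rho)) = h^\vee\star\Phi^0 \in C^\infty(X)^K$, which makes sense precisely because of the level-wise compactness of support built into $\mathcal K_X$), we conclude that the function $\Sat^{-1}(L(\check\rho))$ on $X/K = \Lambda_X^+$ coincides with the restriction of the right-hand side to $\Lambda_X^+$, which is the claim.

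The only nontrivial step is the extension of Proposition~\ref{propinversion} from $\mathcal H_X$ to $\mathcal K_X$, but this is essentially formal given the setup just before the theorem: the decomposition $\hat h = \sum_{\delta\in \Lambda_X'} \hat h_\delta$ into level pieces with $\hat h_\delta\in \mathcal H_X$ compactly supported, together with the fact that each $h_\delta \star \Phi$ lies in $\mathcal S(X \cap \det^{-1}(\delta_1+\delta))^K$ when $\Phi \in \mathcal S(X\cap \det^{-1}(\delta_1))^K$, means that both sides agree level by level in $\Lambda_X'$, where Proposition~\ref{propinversion} applies directly. So the "main obstacle" is really just bookkeeping, and there is no substantive analytic difficulty to overcome.
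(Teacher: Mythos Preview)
Your proposal is correct and matches the paper's own argument, which simply states that the theorem is ``a combination of Proposition~\ref{propinversion} and Theorem~\ref{theoremBF}.'' You supply more detail than the paper does (the verification that $L(\check\rho)\in\mathcal K_X$ and the level-by-level extension to $\mathcal K_X$), and your separate invocation of Corollary~\ref{corollaryvalidity} is already absorbed into the ``Moreover'' clause of Proposition~\ref{propinversion}, but the substance is identical.
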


\section{The group case; relation to the formula of Wen-Wei Li} \label{sec:groupcase}

We now restrict to the case $X=H$, $G=H\times H$, where $H$ is a (split) reductive group over $F$. Here we have $\check G_X = \check H$ but, according to Example \ref{examplegroup2}, there is a choice to be made in the identification. We make the choice to embed $\check H$ in $\check G$ as:
$$ z\mapsto (z^c, z)$$
in order for our Satake transform to be compatible with the usual one. Recall from Example \ref{examplegroup1} that this choice is compatible with the ``obvious'' Cartan decomposition in terms of antidominant coweights, i.e.\ $\check\lambda\in \Lambda_H^+$ corresponds to the double $K_H$-coset of $\check\lambda (\varpi)$, where $\varpi$ is a uniformizer of our field.

In the group case we have $\mathcal C_X=$ the cone spanned by the positive coroots. Assume that $\check\rho\in \Lambda_H^+$ is outside of the rational span of positive coroots, and let $L(\check\rho)$, as above, be the rational function:
$$ \det(I-\bullet|_{V_{\check\rho}})^{-1},$$
where $V_{\check\rho}$ is the irreducible representation of $\check H$ of lowest weight $\check\rho$. Then we have:

\begin{corollary}\label{corollarygroup}
Let $h$ be the series of elements in the Hecke algebra whose (usual) Satake transform is $\hat h = L(\check\rho)$. Identify it with a function on $K_H\backslash H/K_H$ by fixing the Haar measure on $H$ which is $1$ on $K_H$. 

Then the value of $h$ on $\check\lambda(\varpi)$, $\check\lambda\in \Lambda_H^+$, is equal to $q^{\left<\rho_{B_H},\check\lambda\right>}$ (where $\rho_{B_H}$ is the half-sum of positive roots on $H$) times the coefficient of $e^{\check\lambda}$ in the power series of:
\begin{equation}\label{eqcorollarygroup}
 L(\check\rho) \cdot \prod_{\check\gamma\in\check\Phi_H^+} \frac{1-e^{\check\gamma}}{1-q^{-1} e^{\check\gamma}} .
\end{equation}
(expanded in terms of the cone spanned by $\check\rho$ and the positive coroots).
\end{corollary}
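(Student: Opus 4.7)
The plan is to specialize Theorem \ref{maintheorem} to the group case, matching each piece of the general formula with its group-theoretic counterpart as prepared in Examples \ref{examplegroup1}--\ref{examplegroup4}.

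First I would invoke Theorem \ref{maintheorem} for $X=H$, $G=H\times H$, substituting the value $\Asymp(\Phi^0)=\prod_{\check\gamma\in\check\Phi_H^+}(1-e^{\check\gamma})/(1-q^{-1}e^{\check\gamma})$ from Example \ref{examplegroup4}. The theorem then asserts that $\Sat^{-1}(L(\check\rho))$, evaluated at $x_{\check\lambda}K$, equals $q^{\langle\rho_{P(X)},\check\lambda\rangle}$ times the coefficient of $e^{\check\lambda}$ in the power series \eqref{eqcorollarygroup}, the expansion being taken over the strictly convex cone spanned by $\check\rho$ and the positive coroots.

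Next I would verify the identification $\rho_{P(X)}=\rho_{B_H}$. Choose the basepoint $x_0=1\in H$ and $B_1,B_2$ opposite Borels of $H$ so that $1$ lies in the open Bruhat cell. The stabilizer of $x_0$ in $B_1\times B_2$ is the diagonal torus $T^\diag$, whose centralizer in $G$ is the Cartan $T\times T$; thus $L(X)=T\times T$ is already a torus, $P(X)$ is a Borel of $G$, and $\rho_{P(X)}=(-\rho_{B_H},\rho_{B_H})$ as a character of $L(X)$. This character vanishes on $T^\diag$, so it descends to a character of $A_X=L(X)/T^\diag$, and under the ``right'' identification $A_X\simeq T$ of Example \ref{examplegroup1} it becomes exactly $\rho_{B_H}$; hence $\langle\rho_{P(X)},\check\lambda\rangle=\langle\rho_{B_H},\check\lambda\rangle$ for $\check\lambda\in\Lambda_X=\Lambda_H$.

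Finally I would reconcile the Satake and Cartan conventions using Examples \ref{examplegroup1} and \ref{examplegroup2}. The embedding $\check H\hookrightarrow \check G$, $z\mapsto(z^c,z)$, makes our Satake transform coincide with the usual one, so the $h$ of the corollary is exactly $\Sat^{-1}(L(\check\rho))$; and under the same ``right'' identification $\Lambda_X=\Lambda_H$, an antidominant $\check\lambda$ is associated with the double coset $x_{\check\lambda}K=K_H\check\lambda(\varpi)K_H$. Thus $h(\check\lambda(\varpi))$ is precisely the quantity computed by the theorem, giving the formula of the corollary. The only potentially delicate point is the check that $\rho_{P(X)}$ descends correctly to $A_X$ and coincides with $\rho_{B_H}$ under the chosen identification, which follows at once from the vanishing of $(-\rho_{B_H},\rho_{B_H})$ on $T^\diag$; the rest is bookkeeping.
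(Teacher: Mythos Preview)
Your proposal is correct and follows essentially the same approach as the paper's proof, which reads in full: ``Immediate combination of Theorem \ref{maintheorem} and Example \ref{examplegroup3}, using the fact that in this case, for $\check\lambda\in \Lambda_X = \Lambda_H$, $\left<\rho_B,\check\lambda\right> = \left<\rho_{B_H},\check\lambda\right>$.'' You simply unpack in more detail the identity $\langle\rho_{P(X)},\check\lambda\rangle=\langle\rho_{B_H},\check\lambda\rangle$ (via the vanishing of $(-\rho_{B_H},\rho_{B_H})$ on $T^\diag$) and the compatibility of Satake and Cartan conventions from Examples \ref{examplegroup1}--\ref{examplegroup2}, which the paper leaves implicit.
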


\begin{proof}
Immediate combination of Theorem \ref{maintheorem} and Example \ref{examplegroup3}, using the fact that in this case, for $\check\lambda\in \Lambda_X = \Lambda_H$, $\left<\rho_B,\check\lambda\right> = \left<\rho_{B_H},\check\lambda\right>$.
\end{proof}

\begin{example}[Godement-Jacquet for $\GL_2$]
 We choose a basis $\epsilon_1,\epsilon_2$ for the cocharacter lattice of $H=\GL_2$ such that the positive coroot is $\check\alpha=\epsilon_1-\epsilon_2$. We set $z_i = e^{\epsilon_i}$. The lowest weight of the standard representation of $\check H$ is $\epsilon_2$, and hence:
 $$\hat h:= L(\std) = \frac{1}{(1-z_1)(1-z_2)}.$$
 
 Thus:
 \begin{equation}\label{GJ2} h^\vee\star \Phi^0 = \left. \frac{1-\frac{z_1}{z_2}}{1-q^{-1}\frac{z_1}{z_2}} \cdot \frac{1}{(1-z_1)(1-z_2)}\right|_{\Lambda_H^+}.
 \end{equation}

 It is not immediately evident that this is related to the characteristic function of $\Mat_2(\mathfrak o)$. However, notice that the above expression is equal to:
 $$ \frac{1}{(1-q^{-1}\frac{z_1}{z_2})(1-z_2)} - \frac{\frac{z_1}{z_2}}{(1-q^{-1}\frac{z_1}{z_2})(1-z_1)}.$$
 
 The second summand can be discarded, as its support does not meet the set of antidominant coweights. (The set of antidominant coweights corresponds to monomials of the form $z_1^i z_2^j$ with $j\ge i$.)
 
 The support of the first summand intersects the antidominant coweights on the set of monomials $z_1^i z_2^j$ with $j\ge i\ge 0$, and the coefficient of such a monomial is: $q^{-i}$.
 
 On the other hand, $\left<\rho,i\epsilon_1 +j \epsilon_2\right> = \frac{i-j}{2}$. Therefore, the characteristic function of the coset of the element $\left(\begin{array}{cc} \varpi^i \\ & \varpi^j\end{array}\right)$ appears with coefficient:
 $$ q^{-i+\frac{i-j}{2}} = q^{-\frac{i+j}{2}} = |\det|^\frac{1}{2},$$ and the function of $\eqref{GJ2}$ is equal to $|\det|^\frac{1}{2}$ times the characteristic function of $\Mat_2(\mathfrak o)$. 
 
 (We remind that for $\GL_n$ and $\hat h= L(\std)$ one has: $h^\vee\star \Phi^0= |\det|^{\frac{n-1}{2}}$ times the characteristic function of $\Mat_n(\mathfrak o)$.)

\end{example}

Finally, let us discuss the relationship of Corollary \ref{corollarygroup} with the result \cite[Theorem 5.3.1]{WWL} of Wen-Wei Li. I am very grateful to Wen-Wei Li for explaining this relationship to me and allowing me to reproduce the arguments here.

As in the setting of Corollary \ref{corollarygroup}, we fix an anti-dominant coweight $\check\rho$ which is not contained in the linear span of coroots. We assume that the character group of $H$ is generated by a character ``$\det$'' with the property that $\left<\det,\check\rho\right>=1$. (This is not a serious restriction, and one can recover Corollary \ref{corollarygroup} directly from the formulation of Wen-Wei Li, without this assumption.)

To introduce the result of \cite{WWL} we let $\Psi$ denote the multiset arising as the \emph{multiset union} of the set $\check\Phi_H^+$ of positive corrots of $H$ and the multiset $V$ of weights of the representation $\check\rho$ of the dual group. Consider the product:
$$\prod_{\check\nu\in \Psi}\frac{1}{1-qe^{-\check\nu}},$$
thought of as a series in $\span_{\mathbb Z_{\ge 0}}(\Psi)$ with coefficients in $\mathbb Z[q]$. For $\check\mu\in \Lambda_H$, let $\mathscr P_\Psi(\check\mu,q)\in\mathbb Z[q]$ denote the coefficient of $e^{\check\mu}$ in the above power series, i.e.\ formally:
$$ \sum_{\check\mu\in \Lambda_H} \mathscr P_\Psi(\check\mu,q) e^{\check\mu} = \prod_{\check\nu\in \Psi}\frac{1}{1-qe^{-\check\nu}}.$$

The formula of \cite[Theorem 5.3.1]{WWL} asserts that $\Sat^{-1}(L(\check\rho))$ equals the restriction to $\Lambda_H^+$ of the following series in our basis elements $e^{\check\mu}$:
\begin{equation}\label{eqWWL} \sum_{\check\mu\in \Lambda_H, \left<\det,\check\mu\right>\ge 0} c_{\check\mu}(q)e^{\check\mu},
\end{equation}
with:
$$ c_{\check\mu} (q) = \sum_{w\in W_H} (-1)^{\ell(w)} \mathscr P_\Psi(\check\rho_B- w\check\rho_B, -\check\mu; q^{-1}) q^{\left<\det,\check\mu\right>},$$
where $\check\rho_B$ denotes half the sum of positive coroots of $\check H$ and $\ell(w)$ is the length of $w$.

The coefficient $c_{\check\mu}(q)$ was defined only for $\check\mu\in \Lambda_H^+$ in \cite{WWL}, but the same formula works in general.

To establish the equivalence between the two formulas, we first observe that the coefficients $\mathscr P_\Psi(\check\mu,q)$ admit an alternative presentation where the roles of $\check\Phi_H^+$ and $V$ in $\Psi$ are distinguished, namely:
$$ \sum_{\check\mu\in \Lambda_H} \mathscr P_\Psi(-\check\mu,q^{-1}) q^{\left<\det,\check\mu\right>} e^{\check\mu} = \prod_{\check\gamma\in\check\Phi_H^+} \frac{1}{1-q^{-1}e^{\check\gamma}}\prod_{\check\nu \in V}\frac{1}{1-e^{\check\nu}}.$$
Indeed, for a certain $\mathbb Z_{\ge 0}$-combination of elements of $\Psi$ to be equal to $\check\mu$, we must have $k := \left<\det,\check\mu\right>\ge 0$,
 and the elements from $V$ must be used exactly $k$ times. 
 
Combining this with the definition of $c_{\check\mu}$, changing $\check\mu$ to $\check\mu+\check\rho_B-w\check\rho_B$ and taking into account that 
$\left< \det, \check\mu+\check\rho_B-w\check\rho_B\right> = \left<\det,\check\mu\right>,$
the series \eqref{eqWWL} can be written as:
$$\sum_{\check\mu\in \Lambda_H, \left<\det,\check\mu\right>\ge 0} \mathscr P_\Psi(-\check\mu;q^{-1}) q^{\left<\det, \check\mu\right>}\sum_{w\in W_H} (-1)^{\ell(w)} e^{\check\mu +\check\rho_B-w\check\rho_B} = $$
$$ = \prod_{\check\gamma\in\check\Phi_H^+} \frac{1}{1-q^{-1}e^{\check\gamma}}\prod_{\check\nu \in V}\frac{1}{1-e^{\check\nu}} \sum_{w\in W_H} (-1)^{\ell(w)} e^{\check\rho_B-w\check\rho_B}.$$

Finally, invoking the Weyl denominator formula:
$$\prod_{\check\gamma\in \check\Phi_H^+} (1 - e^{\check\gamma}) = \sum_{w\in W_H} (-1)^{\ell(w)} e^{\check\rho_B-w\check\rho_B}$$
we get:
$$ \prod_{\check\gamma\in\check\Phi_H^+} \frac{1-e^{\check\gamma}}{1-q^{-1}e^{\check\gamma}}\prod_{\check\nu \in V}\frac{1}{1-e^{\check\nu}} = L(\check\rho)\cdot \prod_{\check\gamma\in\check\Phi_H^+} \frac{1-e^{\check\gamma}}{1-q^{-1}e^{\check\gamma}},$$
which is equal to \eqref{eqcorollarygroup}.

\bibliographystyle{alphaurl}
\bibliography{inverseSatake}

\end{document}